\begin{document}

\newtheorem{theorem}{Theorem}[section]
\newtheorem{corollary}[theorem]{Corollary}
\newtheorem{definition}[theorem]{Definition}
\newtheorem{conjecture}[theorem]{Conjecture}
\newtheorem{question}[theorem]{Question}
\newtheorem{lemma}[theorem]{Lemma}
\newtheorem{proposition}[theorem]{Proposition}
\newtheorem{example}[theorem]{Example}
\newenvironment{proof}{\noindent {\bf
Proof.}}{\rule{3mm}{3mm}\par\medskip}
\newcommand{\remark}{\medskip\par\noindent {\bf Remark.~~}}
\newcommand{\pp}{{\it p.}}
\newcommand{\de}{\em}

\newcommand{\JEC}{{\it Europ. J. Combinatorics},  }
\newcommand{\JCTB}{{\it J. Combin. Theory Ser. B.}, }
\newcommand{\JCT}{{\it J. Combin. Theory}, }
\newcommand{\JGT}{{\it J. Graph Theory}, }
\newcommand{\ComHung}{{\it Combinatorica}, }
\newcommand{\DM}{{\it Discrete Math.}, }
\newcommand{\ARS}{{\it Ars Combin.}, }
\newcommand{\SIAMDM}{{\it SIAM J. Discrete Math.}, }
\newcommand{\SIAMADM}{{\it SIAM J. Algebraic Discrete Methods}, }
\newcommand{\SIAMC}{{\it SIAM J. Comput.}, }
\newcommand{\ConAMS}{{\it Contemp. Math. AMS}, }
\newcommand{\TransAMS}{{\it Trans. Amer. Math. Soc.}, }
\newcommand{\AnDM}{{\it Ann. Discrete Math.}, }
\newcommand{\NBS}{{\it J. Res. Nat. Bur. Standards} {\rm B}, }
\newcommand{\ConNum}{{\it Congr. Numer.}, }
\newcommand{\CJM}{{\it Canad. J. Math.}, }
\newcommand{\JLMS}{{\it J. London Math. Soc.}, }
\newcommand{\PLMS}{{\it Proc. London Math. Soc.}, }
\newcommand{\PAMS}{{\it Proc. Amer. Math. Soc.}, }
\newcommand{\JCMCC}{{\it J. Combin. Math. Combin. Comput.}, }
\newcommand{\GC}{{\it Graphs Combin.}, }
\title{On majorization of closed walks vector of trees with given degree sequences\thanks{
This work is supported by the National Natural Science Foundation of China (Nos. 11601337, 11601208 and 11531001 ), the Joint NSFC-ISF Research
Program (jointly funded by the National Natural Science Foundation of China and the Israel Science Foundation (No. 11561141001 )), Training program of
Lishui (No. 2014RC34) and the Foundation of Shanghai Normal University (No. SK201602) and the Foundation of Shanghai Municipal Education Commission .
\newline \indent $^{\dagger}$Correspondent author:
Ya-Lei Jin (Email: yaleijin@shnu.edu.cn)}}
\author{ Ya-Hong Chen $^{1}$, Daniel Gray$^2$,  Ya-Lei Jin$^{3\dagger}$, Xiao-Dong Zhang$^4$ 
\\
 {\small $^1$Department of Mathematics},
{\small Lishui University} \\
{\small  Lishui, Zhejiang 323000, PR China}\\
{\small $^2$Department of Mathematical Sciences, Georgia Southern University,}\\
{\small Statesboro, GA 30458, USA}\\
{\small $^3$Department of Mathematics, Shanghai Normal University,}\\
{\small 100 Guilin road, Shanghai 200234, PR China}\\
{\small $^4$School of Mathematical Sciences and MOE-LSC,}
{\small Shanghai Jiao Tong University} \\
{\small  800 Dongchuan road, Shanghai, 200240,  P.R. China}\\
}

\date{}
\maketitle
 \begin{abstract}
  Let $C_{v}(k;T)$ be the number of the closed walks of length $k$ starting at vertex $v$ in a tree $T$.  We prove that for a given  tree degree sequence $\pi$, then for any tree with degree sequence $\pi$, the sequence
   $C(k;T)\equiv(C_{v}(k;T), v\in V(T))$ is weakly majorized by the sequence $C(k;  T_{\pi}^*)\equiv C(k;  T_{\pi}^*, v\in V(T^*))$, where $T_{\pi}^*$ is the greedy tree corresponding to $\pi$. In addition, for two trees degree sequences $\pi,~\pi'$, if $\pi$ is majorized by $\pi'$, then $C(k;T_{\pi}^*)$ is weakly majorized by $C(k;T_{\pi'}^*)$.
   \end{abstract}

\section{Introduction}
Let $G=(V(G),E(G))$ be a simple graph of order $n$. A {\it walk} of $G$ is a sequence of vertices and edges, i.e., $w_1e_1w_2e_2\cdots e_{k-1}w_k$ such that $e_iw_iw_{i+1}\in E(G),~i=1,2,...,k-1$.  Moreover, if $w_1=w_k$, then this walk is called {\it closed walk} with length $k-1$. Further, denote by $C_{v}(k;G)$ be the number of the closed walks of length $k$ starting at vertex $v$ in  $G$ and the vector
   $C(k;G)\equiv(C_{v}(k;G), v\in V(G))$. Moreover, denote by $M_k(G)$ the number of the closed walks of length $k$  in $G$.  The number of closed walks of length $k$ in $G$ has been intensively studied. For example,  Dress etc. \cite{dress} studied when  $ M_{(k-1)}M_{k+1}(T)-M_{k}^2(T)$ is positive, zero, or negative.  Ta\"{u}big etc. \cite{taubig} investigate the growth of the number $M_k(G)$ and related inequalities.
   Further,  the number of closed walks  may be used to characterize  the complexity in the model of the symmetric Turing machine (see \cite{taubig}) and to study the Dense $r-$Subgraph Problem (see \cite{feige}). Since the dense $r-$subgraph maximization problem is of
computing the dense $r-$vertex subgraph of a given graph, it may be  an interesting problem to study the the number of closed walks of length $k$ with starting at vertices in any  vertex subset $U$ of $V(G)$ with $|U|=r\le n$. If $r=n,$  Csikvari \cite{Csikvari}  proved that the star has the maximum number of closed walks of length $k$ among all the trees on $n$ vertices, which confirm a conjecture of Nikiforov concerning the number of closed walks on trees. Further, Bollobas and Tyomkyn \cite{bob} proved that the $KC-$ transformation on tree increases the number of closed walks of length $k$.  In  addition,
  Andriantian and Wagner \cite{AnS} characterized the extremal trees with the maximum $M_k(T)$ among all trees with a given  tree degree sequence $\pi$.
  If $r<n$, there are no any results on the problem.

  On the other hand, the number of closed walks is direct relationship to the spectral radius of the adjacency matrix. Let $A(G)=(a_{ij})$ be the {\it adjacent matrix} of $G$, where $a_{ij}=1$ if $v_i$ is adjacent to $v_j$ and 0 otherwise,  then $A(G)$ has $n$ real eigenvalues $\lambda_1\ge \lambda_2\ge ...\ge \lambda_n$.  Since the trace of $A^k(G)$ is equal to the number of closed walks of length $k$ in $G$, it is easy to see that
\begin{equation}
M_k(G)=\sum_{i=1}^n\lambda^k,
\end{equation}
which is also called The {\it $k$-th spectral moment} of $G$.
Moreover, the  {\it Estrada index} \cite{PeG} of $G$, which is relative to $M_k(G)$ and proposed by Estrada, is defined to be
\begin{equation}
EE(G)=\sum_{i=1}^ne^{\lambda_i}.
\end{equation}
It is easy to see
\begin{eqnarray}
&&EE(G)=\sum_{i=1}^n\sum_{k=0}^{\infty}\frac{\lambda_i^k}{k!}=\sum_{k=0}^{\infty}\frac{M_k(G)}{k!}.
\end{eqnarray}
The Estrada index may have many applications  in the study of  molecular structures and  complex network, etc. For more about the Estrada index, the reader may refer to the excellent survey \cite{GuD}.
 A nonincreasing sequence of nonnegative integers
$\pi= (d_0, d_1,\cdots, d_{n-1})$ is called {\it graphic} if there exists a simple connected graph having $\pi$ as its vertex degree sequence.
  For a given tree degree  sequence $\pi=(d_0, d_1, \cdots, d_{n-1})$, let
$$\mathcal{T}_{\pi}=\{ T\ |\ T {\mbox{ is  any tree with}} \ \pi \ {\mbox{as  its degree sequence}}\}.$$
 There are several papers which investigated the graph parameters, such as
 Energy, Hosoya index and Merrifield-Simmons index in \cite{An}; the Estrada index in \cite{AnS}; the Wiener index in \cite{W1} and \cite{zhangxiang}; the largest spectral radius in \cite{BJ}; the Laplacian spectral radius in \cite{Zh}; the number of subtrees in \cite{ZhZ1, ZhZ2}, etc.

 In this paper, motivated by the  Dense $r-$Subgraph Problem and the research in the class $\mathcal{T}_{\pi}$, we  consider the following problem: determine
 $$\max_{T\in \mathcal{T}_{\pi}}\max_{U\subseteq V(T), |U|=r}\sum_{v\in U}C_k(v, T)$$
 for a given tree degree sequence $\pi$. The rest of this paper is arranged as follows. In Section 2, after introducing some notations, we present the main results of this paper. In the sections $3$ and $4$,  the proofs of Theorems \ref{Maintheorem1} and \ref{Maintheorem2} are given respectively.

\section{Preliminary and main results}
In order to present our main results, we first introduce  some notations. Let $G=(V(G), E(G))$ be a simple graph with a root set $V_0= \{v_{01}, ..., v_{0r}\}\subseteq V(G)$.  The {\it height} $h(v)$ of a vertex $v$ in $G$ is defined by
$$h(v)=dist(v, V_0)=\min_{w\in V_0}\{dist(v, w)\},$$
where $dist(v, w)$ is the distance between vertices $v$ and $w$ in $V(G)$. Moreover, we say that $v$ is at the $h(v)-$th level. Further, we need the following notation from \cite{AnS}.
\begin{definition}\cite{AnS}
Let $F$ be a forest with the root set $V_{root}= \{v_{01},...v_{0r}\}$ and the maximum height of all components is $l-1$.  Then the sequence
$$\pi=(V_0,...,V_{l-1}) $$ is called
the {\it leveled degree sequence} of $F$, if
 $V_i$ is the non-increasing sequence formed by the degrees of the vertices of $F$ at the $i$-th level for  any $i=0, 1, \cdots, l-1$.
\end{definition}

\begin{definition}
Let $F$ be a forest with the following leveled degree sequence
$$\pi=(V_0,...,V_{l-1}).$$
A well-ordering $\prec$ of the vertices in $F$ is called {\it breadth-first search ordering} (BFS-ordering for short) if the following holds for all vertices $u,~ v $ in the same level:\\
$(1)$ $u \prec v$ implies $d(u)\ge d(v)$;\\
$(2)$ If there are two edges $uu_1 \in E(F)$ and $vv_1\in  E(F )$ such that $u \prec v$, $h(u)=h(u_1)+1$ and $h(v)=h(v_1)+1$, then
$u_1 \prec v_1$.
\end{definition}
Moreover, a forest with BFS-ordering is called {\it level greedy forest}. If the forest is a tree, then it is called {\it level greedy tree}. If $|V_0|=2$ and add an edge to the vertices in $V_0$, then it is called {\it edge-rooted level greedy tree}. If $|V_0|=1$ and $\min\limits_{d\in V_i} \{d\}\ge \max\limits_{d'\in V_{i+1}}\{d'\},~0\le i\le l-2$, then it is called {\it greedy tree}. It is easy, but boring, to check the above definitions is equivalent to the level greedy forest (tree, etc) defined in \cite{AnS}.
For a given tree degree sequence $\pi$, there exists exactly one greedy tree with degree sequence $\pi$. Moreover, this greedy tree is denoted by $T_{\pi}^*$ (see \cite{Zh}).

In addition, we also need the notation of majorization. Let  $\alpha=(x_0,x_1,...,x_{n-1})$ and $\beta=(y_0,y_1,\cdots, y_{n-1}$ be two nonnegative sequences. We arrange the entries of $\pi$  and $\tau$ in nonincreasing order
 $\pi_{\downarrow}= (x_{[0]}, \cdots,  x_{[n-1]})$  and
$\tau_{\downarrow} = (y_{[0]},\cdots,  y_{[n-1]}) $ with $x_{[0]}\ge x_{[1]}\ge \cdots\ge x_{[n-1]}$  and  $y_{[0]}\ge y_{[1]}\ge\cdots\ge y_{[n-1]}$. Then we say that $\alpha$ is  {\it weakly majorized} by $\beta$, denoted by $\alpha\lhd_w \beta$, if
\begin{equation}\label{M-Inequality}
\sum_{i=0}^tx_{[i]}\le \sum_{i=0}^ty_{[i]}\ \ \mbox{for} \ t=0,1, \cdots, n-1.
\end{equation}.
Furthermore, if
$$\sum_{i=0}^{n-1}x_{[i]}= \sum_{i=0}^{n-1}y_{[i]}$$
then $\alpha$ is {\it  majorized} by $\beta$, denoted by $\alpha\lhd\beta$. If some inequality in $(\ref{M-Inequality})$ is strict, then the majorization (weak majorization, respectively) is {\it strict}. For more about the majorization, the reader may refer to \cite{MaO}.

Now we are ready to present the main results of this paper.
\begin{theorem}\label{Maintheorem1}
Let $\pi$ be a tree degree sequence.  Then, for any $T\in \mathcal{T}_\pi$, $$C(k;T)\lhd_w C(k;T_{\pi}^*),$$ where $C_{v}(k;T)$ is  the number of the closed walks of length $k$ starting at vertex $v$ in a tree $T$ and  $C(k;T)\equiv(C_{v}(k;T), v\in V(T))$. In other words,
$$\max_{T\in \mathcal{T}_{\pi}}\max_{U\subseteq V(T), |U|=r}\sum_{v\in U}C_k(v, T)
=\sum_{v\in U^*}C_k(v, T_{\pi}^*),\ \mbox{for}\ r=0, \cdots, n-1,$$
where $U^*$ is the first $r$ vertices in the greedy tree $T_{\pi}^*$ with a BFS-ordering.
\end{theorem}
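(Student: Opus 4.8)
The goal is to show that for any tree $T \in \mathcal{T}_\pi$, the closed-walk vector $C(k;T)$ is weakly majorized by $C(k;T_\pi^*)$, where $T_\pi^*$ is the greedy tree. I would establish this via a sequence of local transformations that, starting from an arbitrary $T$, eventually reach $T_\pi^*$, and show each transformation does not decrease any partial sum of the sorted closed-walk vector. The natural candidate transformations are the ones already used in the literature for the parameter $M_k$ (e.g.\ in \cite{AnS, Zh}): a ``shifting'' or ``greedy rearrangement'' operation that takes a tree with degree sequence $\pi$ and a BFS-type ordering and moves it closer to the greedy tree while keeping $\pi$ fixed. The key point that must be upgraded from the $M_k$ setting to ours is that we need control not just on the \emph{sum} $M_k = \sum_v C_v(k;\cdot)$, but on \emph{every} partial sum of the decreasingly sorted vector — i.e.\ a majorization statement, which is strictly stronger.

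\medskip\noindent
\textbf{Key steps.} First I would set up the combinatorial interpretation: $C_v(k;T)$ counts closed walks of length $k$ based at $v$, and since $T$ is a tree (bipartite, no odd cycles) $C_v(k;T)=0$ for odd $k$, so assume $k$ even. I would record the monotonicity fact that $C_v(k;T)$ is, roughly, controlled by how ``centrally'' $v$ sits and by the degrees near $v$: more precisely, walks based at $v$ decompose according to their excursions into the branches hanging off $v$, giving a recursion in terms of the rooted subtrees. Second, I would prove a \emph{local exchange lemma}: if $T$ has an edge $uv$ with $u$ ``higher'' (closer to a chosen center) and some pendant structure at $v$ can be re-rooted at $u$, or two branches at incomparable vertices can be swapped so as to respect degrees-decreasing-with-height, then the resulting tree $T'$ satisfies $C(k;T)\lhd_w C(k;T')$. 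The engine here is a coupling/injection between closed walks in $T$ and in $T'$: one sets up a degree-preserving bijection on vertices that maps each closed walk of $T$ to a closed walk of $T'$ of the same length and shows that, vertex-by-vertex after sorting, the counts can only go up. Third, I would show that iterating all such admissible exchanges terminates at the greedy tree $T_\pi^*$ (a finite-descent argument on a suitable potential, e.g.\ lexicographic comparison of leveled degree sequences), and that along the way the BFS-order identity of the ``top $r$ vertices'' stabilizes, yielding the explicit form $\sum_{v\in U^*}C_k(v,T_\pi^*)$ with $U^*$ the first $r$ vertices in BFS-order of $T_\pi^*$.

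\medskip\noindent
\textbf{The main obstacle.} The hard part is the exchange lemma in its majorization form. For the total-sum statement ($M_k$), one only needs a single inequality and can use trace/spectral-moment machinery or a crude injection. For majorization one must control the \emph{profile} of the vector: when a branch is moved from $v$ to $u$, the count $C_u(k;\cdot)$ goes up but $C_v(k;\cdot)$ and counts at various internal vertices go down, and one must verify that after re-sorting, every prefix sum increases. The cleanest route I foresee is to find, for each exchange, an explicit injection $\Phi$ from the multiset of (vertex, closed walk) pairs of $T$ into that of $T'$ that is \emph{layer-respecting}: it should map a walk based at a vertex of ``rank'' $\le j$ in $T$ to a walk based at a vertex of rank $\le j$ in $T'$. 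Existence of such a rank-monotone injection is exactly equivalent (by a Hall-type / flow argument) to the weak majorization $C(k;T)\lhd_w C(k;T')$. Constructing $\Phi$ concretely — tracking how excursions in the moved branch get reattached and ensuring no two distinct walks collide — is where the real work lies; I expect it to mirror, but substantially extend, the walk-counting bijections of \cite{Csikvari} and \cite{bob}. A secondary technical nuisance is handling the base cases and the vertices of the branch being moved (their heights change, so their ranks change), which requires bookkeeping but no new idea.
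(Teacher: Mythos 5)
Your proposal correctly identifies the shape of the problem (reduce to even $k$, transform $T$ into $T_\pi^*$ by degree-sequence-preserving moves, and prove that each move weakly majorizes the closed-walk vector), but it stops exactly at the point where the theorem's content lives: the ``local exchange lemma'' in majorization form is only announced, with the construction of the rank-monotone injection $\Phi$ explicitly deferred as ``where the real work lies.'' That construction is not a routine extension of the $M_k$-injections of \cite{Csikvari,bob}: under a branch swap the based vertex of a walk may lie inside a moved branch (so its height, and hence its rank, changes), walks may visit both swapped branches, and the sorted order of the vector is not known a priori, so one cannot simply match walks vertex-by-vertex. Your Hall-type equivalence between weak majorization and the existence of a layer-respecting injection is correct for integer vectors, but it converts the problem into an equivalent one rather than solving it. As written, the proposal is a plan with the key lemma missing.

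It is worth noting how the paper avoids a single global injection. Instead of local branch swaps, it fixes a vertex or edge root and compares $T$ with the \emph{level greedy} tree having the same leveled degree sequence. Closed walks are classified by their \emph{level sequences}, and Lemma~\ref{1.3} (from \cite{AnS}) already gives, for each fixed level sequence, a weak majorization of the counts \emph{within each level}; summing over level sequences (using $\mathcal{S}_i(k;F)\subseteq\mathcal{S}_i(k;G)$) and running recursions that decompose a closed walk at a vertex into excursions below it and passages through its parent edge (Lemmas~\ref{1.5}--\ref{Edgeroot}) yields the per-level weak majorization of the full vectors; concatenating over levels gives the global one. The passage from ``level greedy for every root choice'' to ``greedy'' is then handled by the termination result (Theorem~20 of \cite{AnS}) and the semi-regular property of \cite{SzW}, rather than by a potential-function descent on exchanges. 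So the paper's route both supplies the missing engine (via the already-established per-level-sequence majorization) and replaces your termination argument with a citable one; to complete your route you would still need to build $\Phi$ for a genuine branch swap, which is at least as hard as what the paper does.
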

\begin{theorem}\label{Maintheorem2}
Let $\pi,~\pi'$ be two tree degree sequences with $\pi\lhd_w\pi'$. Then $$C(k;T_{\pi}^*)\lhd_w C(k;T_{\pi'}^*).$$
In other words, if $U_{\pi}^*$  and $U_{\pi'}^*$ are the first $r$ vertices in the greedy tree $T_{\pi}^*$  and $T_{\pi'}^*$ with the BFS-ordering respectively, then
$$\sum_{v\in U_{\pi}^*}C_k(v, T_{\pi}^*)\le \sum_{v\in U_{\pi'}^*}C_k(v, T_{\pi'}^*).$$
\end{theorem}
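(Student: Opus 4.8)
The plan is to deduce Theorem \ref{Maintheorem2} from Theorem \ref{Maintheorem1} together with a stepwise comparison of greedy trees along a chain of ``elementary'' degree sequences. Recall that a nonincreasing sequence of positive integers is a tree degree sequence precisely when its entries sum to $2(\text{length}-1)$. On tree degree sequences I will use two elementary moves, each of which produces a tree degree sequence that weakly majorizes the previous one: a \emph{pendant move}, which raises one entry by $1$ and appends a new entry equal to $1$ (the effect on the degree sequence of attaching a new leaf); and a \emph{unit transfer}, which keeps the length fixed, raises one entry $d_a$ to $d_a+1$ and lowers another entry $d_b\ge 2$ to $d_b-1$ with $d_a\ge d_b$ (a reverse ``Robin Hood'' exchange).

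\medskip
\noindent\textbf{Step 1: chaining.} I first show that if $\pi\lhd_w\pi'$ are tree degree sequences then there is a sequence $\pi=\pi_0,\pi_1,\dots,\pi_m=\pi'$ of tree degree sequences such that $\pi_{t+1}$ is obtained from $\pi_t$ by a pendant move or a unit transfer, and $\pi_t\lhd_w\pi_{t+1}$ for every $t$. If $\pi$ and $\pi'$ have the same length they have the same sum, so $\pi\lhd\pi'$, and the Hardy--Littlewood--P\'olya transfer argument (\cite{MaO}) yields a chain of unit transfers; the intermediate sequences stay valid because a transfer is only applied to a (sorted) entry strictly exceeding the corresponding entry of $\pi'$, hence at least $2$. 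If $\sum\pi<\sum\pi'$, put $s=(\sum\pi'-\sum\pi)/2$; by the Marshall--Olkin characterization of weak majorization (\cite{MaO}) there is an integer vector $u$ with $\pi\le u$ entrywise and $u\lhd\pi'$, and one may push $u$ by further Robin Hood exchanges towards $\pi'$ to a \emph{tree} degree sequence $\hat\pi$ still satisfying $\pi\le\hat\pi$ entrywise and $\hat\pi\lhd\pi'$. Now $\hat\pi$ is reachable from $\pi$ by $s$ pendant moves, and $\hat\pi,\pi'$ have equal length, so the previous case finishes the chain. This step is routine, though it needs a little care with re-sorting, and uses only standard majorization facts from \cite{MaO}.

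\medskip
\noindent\textbf{Step 2: reducing each elementary move to Theorem \ref{Maintheorem1}.} Suppose first $\pi_{t+1}$ comes from $\pi_t$ by a pendant move raising an entry of value $d$. Choose in $T_{\pi_t}^*$ a vertex $v$ with $d(v)=d$, attach a new leaf $u$ to $v$, and call the result $T$; then $T\in\mathcal{T}_{\pi_{t+1}}$. Every closed walk of $T_{\pi_t}^*$ remains a closed walk of $T$, so $C_w(k;T)\ge C_w(k;T_{\pi_t}^*)$ for all $w\in V(T_{\pi_t}^*)$ while $C_u(k;T)\ge 0$ is a new coordinate; hence $C(k;T_{\pi_t}^*)\lhd_w C(k;T)$, and Theorem \ref{Maintheorem1} plus transitivity of $\lhd_w$ gives $C(k;T_{\pi_t}^*)\lhd_w C(k;T_{\pi_{t+1}}^*)$. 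For a unit transfer moving a unit from an entry $d_b\ge 2$ to an entry $d_a\ge d_b$, I realize it as a \emph{branch transplant}: in $T_{\pi_t}^*$ pick vertices $v_a$ of degree $d_a$ and $v_b$ of degree $d_b$ (distinct, with $v_a$ no deeper than $v_b$ in the BFS-order by greediness), choose a neighbour $w$ of $v_b$ not on the $v_b$--$v_a$ path, delete the edge $v_bw$ and reattach the component of $w$ to $v_a$; the resulting tree $T$ lies in $\mathcal{T}_{\pi_{t+1}}$, and again it suffices to prove $C(k;T_{\pi_t}^*)\lhd_w C(k;T)$, after which Theorem \ref{Maintheorem1} and transitivity close the step.

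\medskip
\noindent\textbf{Step 3: the key monotonicity (main obstacle).} What remains is to show that the branch transplant above does not decrease the closed-walks vector in the weak-majorization order: transplanting a branch onto a vertex that is higher in the BFS-order (equivalently, of larger degree and no greater height) can only increase $\sum_{v\in U}C_v(k;\cdot)$ for an optimally chosen $r$-set $U$, for every $r$. The intended argument is the vector refinement of the ``shift towards the centre'' phenomenon already used for the scalar moment $M_k$ in \cite{bob}, \cite{Csikvari} and \cite{AnS}: partition the closed walks according to whether they traverse the transplanted edge, leave the others untouched, and for those that do, use the recursion expressing the walk generating function at a vertex of a tree in terms of those of its branches to compare the two trees branch by branch; moving a branch from $v_b$ to the higher, less deep vertex $v_a$ enlarges the relevant generating functions in exactly the partial-sum sense encoded by $\lhd_w$. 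If the proof of Theorem \ref{Maintheorem1} in Section 3 already yields such a branch-transplant monotonicity in sufficient generality, Step 3 merely invokes it; otherwise it is established there by the same walk-counting bookkeeping, and this bookkeeping — tracking \emph{which} coordinate of $C(k;\cdot)$ absorbs each family of new walks so that all partial sums are controlled simultaneously — is the technical heart, and the principal difficulty, of the section. Chaining Steps 2 and 3 along $\pi_0,\dots,\pi_m$ and using transitivity of $\lhd_w$ yields $C(k;T_\pi^*)\lhd_w C(k;T_{\pi'}^*)$, and the reformulation in terms of the first $r$ vertices of the BFS-ordering is then immediate from the definition of $\lhd_w$, exactly as in Theorem \ref{Maintheorem1}.
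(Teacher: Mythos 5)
Your overall strategy --- decompose $\pi\lhd_w\pi'$ into elementary unit transfers, realize each transfer as a branch transplant on the greedy tree, and sandwich with Theorem~\ref{Maintheorem1} --- is exactly the route the paper takes; its Section~4 proof even re-roots $T_{\pi}^*$ at the midpoint vertex or midpoint edge of the $u$--$v$ path (according to the parity of their distance) so that the donor and recipient vertices lie at the same level, which is the precise form your ``$v_a$ no deeper than $v_b$'' condition needs to take for the argument to work. Your Steps~1 and~2 are sound, and your handling of the pendant move is a correct (indeed slightly more careful) treatment of the unequal-length case than the paper gives.

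The genuine gap is Step~3. The branch-transplant monotonicity $C(k;T_{\pi_t}^*)\lhd_w C(k;T)$ is \emph{not} available from Theorem~\ref{Maintheorem1} or anything in Section~3: those results compare a tree with the level greedy tree of the \emph{same} leveled degree sequence, whereas the transplant changes the degree sequence, so your first fallback (``Step~3 merely invokes it'') is unavailable. Your second fallback --- partition walks by whether they traverse the moved edge and compare branch by branch --- cannot by itself yield weak majorization, because the comparison is not coordinatewise: vertices in the wing that loses the branch $B$ strictly lose closed walks, so some coordinates of $C(k;\cdot)$ genuinely decrease. The paper's resolution (Theorems~\ref{VertexM} and~\ref{EdgeM}) is to exploit the level-preserving isomorphism $h$ between the two wings hanging off the common ancestor $w$ (resp.\ the root edge $yy'$), to build explicit injections on walk sets by cutting each closed walk at its visits to $w$ and reflecting through $h$ exactly the pieces that contain $e$, and then to invoke a bespoke majorization criterion (Lemma~\ref{Majorizationlemma}) which requires only that the paired sums $C_{w'}+C_{h(w')}$ do not decrease together with $C_{w'}\le C_{h(w')}$, rather than monotonicity of every coordinate. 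That pairing device and Lemma~\ref{Majorizationlemma} are the missing ideas; without them Step~3 is an assertion, not a proof, and it is precisely the content of the paper's Section~4.
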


\section{The proof of Theorem \ref{Maintheorem1}}
Let $F$ be a rooted forest.   Denote by $\mathcal{C}_v(k;F)$  the set of closed walks of length $k$ starting at $v$ in $T$. Clearly, $|\mathcal{C}_v(k;F)|=C_v(k;F)$.
  If $W=w_1e_1w_2e_2...e_{k-1}w_k$ be a walk in a rooted forest, $(i_1,i_2,...,i_k)$ is called {\it level sequence} of $W$ if $w_t$ is in the $i_t$-th level for $1\le t\le k$. Denote by $C_v(i_1,...,i_k;F)$ the number of closed walks of length $k$ starting at $v$ in $F$ and the level sequences of the closed walks are $(i_1,i_2,...,i_k)$. Denote by $\mathcal{S}(v_j^i,k;F)$ the set of level sequences of walks of length $k$ in $F$ starting at vertex $v_j^i$  in the $i-$ level and by $\mathcal{S}_i(k;F)=\bigcup\limits_{p=1}^{l_i}\mathcal{S}(v_p^i,k;F)$, where $U_i=\{
  v_1^i, \ddots, v_{l_i}^i\}$ is the set of all vertices in the $i$-th level. Denote by $C_{v,w}(k;F)$ be the number of closed walks of length $k$ starting from the edge $vw$ in $F$.  For $v\in V(F)$,  denote the father of $v$ by $f(v)$ if $v$ has father.  Moreover, denote by $\mathcal{F}_{\pi}$ the set of all rooted forests with leveled degree sequence $\pi$. Before presenting the proof of Theorem~\ref{Maintheorem1}, we need some Lemmas.
\begin{lemma}\cite{AnS}\label{1.3}
Let $F\in\mathcal{F}_{\pi}$ for some leveled degree sequence $\pi$ of a vertex-rooted forest and $G=F_{\pi}$
 be the associated leveled greedy forest. Let $v_1^i,\cdots,v_{l_i}^i$ and $g_1^i,\cdots,g_{l_i}^i$ be the vertices of $F$ and $G$ at the $i$-th
 level, respectively. Then the following relations hold for all $i$:
 \begin{eqnarray*}
(C_{v_1^i}(i_1,\cdots,i_l;F),\cdots,C_{v_{l_i}^i}(i_1,\cdots,i_l;F))\lhd_w (C_{g_1^i}(i_1,\cdots,i_l;G),\cdots,C_{g_{l_i}^i}(i_1,\cdots,i_l;G))
\end{eqnarray*}
and
\begin{eqnarray*}
C_{g_1^i}(i_1,\cdots,i_l;G)\geq \cdots\geq C_{g_{l_i}^i}(i_1,\cdots,i_l;G)
\end{eqnarray*}
\end{lemma}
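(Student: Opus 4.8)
This is quoted from \cite{AnS}, so instead of a full proof I will indicate how one establishes such a statement; the two assertions come out most naturally together, by induction, with the inductive hypothesis applied to the smaller sub-forests obtained by cutting the forest at a chosen level. Fix once and for all an admissible level sequence $(i_1,\dots,i_l)$; since $C_v(i_1,\dots,i_l;F)=0$ unless $i_1=h(v)$, only the vertices at level $i_1$ contribute, and I write $\phi_F(v)=C_v(i_1,\dots,i_l;F)$. For a fixed level $i$ the goal is then that the list $\big(\phi_F(x):h(x)=i\big)$, in \emph{any} order, is weakly majorized by $\big(\phi_G(g_1^i),\dots,\phi_G(g_{l_i}^i)\big)$, and that the latter list is already sorted in non-increasing order.

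The engine of the argument is a reduction of $F$ to the leveled greedy forest $G=F_\pi$ through a finite chain $F=F^{(0)},F^{(1)},\dots,F^{(s)}=G$ in which each $F^{(t+1)}$ is obtained from $F^{(t)}$ by a \emph{local switch} preserving the leveled degree sequence $\pi$ (hence the level sets and all BFS-type data at the untouched levels): either swapping the pendant sub-forests below two vertices at the same level of equal degree, or transferring one pendant sub-forest from a level-$j$ vertex $u$ to a level-$j$ vertex $w$ with $d(u)=d(w)+1$. These switches can be scheduled from the top level downward so that each removes one violation of the BFS conditions without creating an earlier one, so the chain terminates at $G$.

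The crux is to prove that one such switch, taken in the greedy direction (consolidating the larger pendant sub-forests under the vertices of larger degree, and under those earlier in the BFS order), never decreases the per-level list $\big(\phi_{F^{(t)}}(x):h(x)=i\big)$ in the weak-majorization order, and this for \emph{every} level $i$ and \emph{every} level sequence simultaneously. I would prove it by classifying each closed walk starting from a level-$i$ vertex according to the first time it enters the pendant sub-forests and vertices disturbed by the switch: walks that never do are matched bijectively before and after, while the remaining walks decompose into excursions whose counts are governed by the local degree data and by closed-walk counts inside the disturbed sub-forests --- to which the inductive hypothesis applies, in particular in the edge-rooted form obtained by cutting at the affected level. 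The bookkeeping should then show that every sorted partial sum of the per-level list can only increase, which is exactly a weak-majorization improvement. I expect this walk-counting comparison, carried out uniformly in the level sequence, to be the main obstacle; it is where the \emph{leveled} (as opposed to ordinary) greedy structure and the induction on sub-forests are indispensable.

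Composing the switches along the chain gives $\big(\phi_F(v_1^i),\dots,\phi_F(v_{l_i}^i)\big)\lhd_w\big(\phi_G(g_1^i),\dots,\phi_G(g_{l_i}^i)\big)$ for every level $i$ and every level sequence, which is the first assertion. The non-increasing ordering of the right-hand side is carried along in the same induction: in the leveled greedy forest the BFS conditions force $\phi_G(g_1^i)\ge\cdots\ge\phi_G(g_{l_i}^i)$ for each level sequence, since the pendant sub-forest below $g_{p+1}^i$ together with the part of $G$ above it embeds, in a degree- and level-preserving way, into the corresponding structure at $g_p^i$, and closed-walk counts with a prescribed level sequence are monotone under such embeddings (again by the inductive hypothesis). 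Together these establish both parts of the lemma.
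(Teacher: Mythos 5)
The paper offers no proof of this lemma at all: it is imported verbatim from Andriantiana and Wagner \cite{AnS}, so there is no in-paper argument to measure your sketch against, and "this is quoted, so here is a plan" is the right instinct. Judged on its own terms, however, your outline has a genuine gap: the entire content of the lemma is concentrated in the claim that a single degree-multiset-preserving switch (swapping or transferring pendant sub-forests between two same-level vertices) moves every per-level list $\bigl(C_x(i_1,\dots,i_l;\cdot):h(x)=i\bigr)$ upward in the weak-majorization order, simultaneously for all levels $i$ and all level sequences. You state this, say you "would prove it by classifying each closed walk," and then explicitly flag the bookkeeping as "the main obstacle." That bookkeeping \emph{is} the lemma. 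Carrying it out requires exactly the kind of explicit injection/pairing of walks (together with Lemma~\ref{Majorizationlemma}-style regrouping of the sorted partial sums over the whole level, not just over the two disturbed vertices) that the present paper spells out at length in Theorems~\ref{VertexM} and~\ref{EdgeM} for the analogous single edge-move; none of that is present here. A secondary unproven assertion is that the switches "can be scheduled from the top level downward" so that the chain terminates at $G$ --- termination of such reductions is itself nontrivial (the paper has to invoke Theorem~20 of \cite{AnS} for the corresponding fact in the proof of Theorem~\ref{2keven}).

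It is also worth noting that your route differs from the one actually taken in \cite{AnS}. The natural (and, as far as I can tell, the source's) proof is a direct induction on the length $l$ of the level sequence using the recursion
$C_v(i_1,\dots,i_l;F)=\sum_{u\sim v,\;h(u)=i_2}C_u(i_2,\dots,i_l;F)$:
the first step of the walk goes either to the parent (one term) or to the children at level $i_1+1$, and the sortedness of the degrees in $V_{i_1}$ together with the inductive hypothesis at level $i_2$ (both the majorization and the monotonicity $C_{g_1^{i_2}}\ge\cdots\ge C_{g_{l_{i_2}}^{i_2}}$, which is why the two assertions must be proved jointly) yields both conclusions at level $i_1$. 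That argument compares $F$ to $G$ directly, needs no global switching chain, and makes the simultaneity over all level sequences automatic. Your opening observation that $C_v(i_1,\dots,i_l;F)=0$ unless $i_1=h(v)$ is correct and is the right starting point, but the argument after it does not close.
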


\begin{lemma}
 \label{lemma1.4}
Let $T\in\mathcal{F}_{\pi}$ for some leveled degree sequence $\pi$ of a vertex-rooted forest and $G=F_{\pi}$
 be the associated leveled greedy forest. Let $v_1^i,\cdots,v_{l_i}^i$ and $g_1^i,\cdots,g_{l_i}^i$ be the vertices of $F$ and $G$ at the $i$-th
 level, respectively. Then the following relations hold for all $i$:
 \begin{eqnarray}
(C_{v_1^i}(k;F),\cdots,C_{v_{l_i}^i}(k;F))\lhd_w (C_{g_1^i}(k;G),\cdots,C_{g_{l_i}^i}(k;G))
\end{eqnarray}
and
\begin{eqnarray}
C_{g_1^i}(k;G)\geq C_{g_2^i}(k;G)\ge\cdots\ge C_{g_{l_i}^i}(k;G).
\end{eqnarray}
\end{lemma}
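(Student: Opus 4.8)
The plan is to deduce Lemma~\ref{lemma1.4} from Lemma~\ref{1.3} by summing the level-sequence refinement over all admissible level sequences. First I would record the decomposition
$$C_{v_j^i}(k;F)=\sum_{\sigma}C_{v_j^i}(\sigma;F),\qquad C_{g_j^i}(k;G)=\sum_{\sigma}C_{g_j^i}(\sigma;G),$$
where $\sigma=(i_1,\dots,i_k)$ ranges over the level sequences of closed walks of length $k$ starting (hence ending) at level $i$; concretely these are the sequences with $i_1=i_k=i$, all $i_t\ge 0$, and $|i_{t+1}-i_t|=1$, which makes sense since every edge of a rooted forest joins consecutive levels. As $F$ and $G$ share the leveled degree sequence $\pi$, they have the same number of levels and the same maximum height, so the index set of $\sigma$ is the same for both; for a $\sigma$ not realized in one of them the corresponding term is simply $0$. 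Then, for each fixed $\sigma$, Lemma~\ref{1.3} supplies both the weak majorization of the level-$i$ vectors and the fact that the dominating vector is already sorted:
$$\bigl(C_{v_1^i}(\sigma;F),\dots,C_{v_{l_i}^i}(\sigma;F)\bigr)\lhd_w\bigl(C_{g_1^i}(\sigma;G),\dots,C_{g_{l_i}^i}(\sigma;G)\bigr),\qquad C_{g_1^i}(\sigma;G)\ge\cdots\ge C_{g_{l_i}^i}(\sigma;G).$$

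The crucial observation for the summation step is that the labelling $g_1^i,\dots,g_{l_i}^i$ is the fixed vertex order of the greedy forest $G$ and does not depend on $\sigma$. Consequently $\sum_\sigma C_{g_j^i}(\sigma;G)=C_{g_j^i}(k;G)$ is nonincreasing in $j$, which is already the second assertion of the lemma. For the first assertion I would invoke (and prove in one line) the elementary fact that weak majorization is additive provided the dominating vectors are all nonincreasing in one common coordinate order: if $x^{(\sigma)}\lhd_w y^{(\sigma)}$ for every $\sigma$ and each $y^{(\sigma)}$ is nonincreasing in the coordinate order $1<2<\cdots$, then $\sum_\sigma y^{(\sigma)}$ is again nonincreasing in that order, so for every $t$
$$\sum_{p=1}^{t}\Bigl(\textstyle\sum_\sigma x^{(\sigma)}\Bigr)_{[p]}=\max_{|S|=t}\sum_\sigma\sum_{p\in S}x^{(\sigma)}_p\le\sum_\sigma\max_{|S|=t}\sum_{p\in S}x^{(\sigma)}_p\le\sum_\sigma\sum_{p=1}^{t}y^{(\sigma)}_p=\sum_{p=1}^{t}\Bigl(\textstyle\sum_\sigma y^{(\sigma)}\Bigr)_{[p]},$$
i.e. $\sum_\sigma x^{(\sigma)}\lhd_w\sum_\sigma y^{(\sigma)}$. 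Applying this with $x^{(\sigma)}$ and $y^{(\sigma)}$ the level-$i$ vectors above yields $\bigl(C_{v_j^i}(k;F)\bigr)_j\lhd_w\bigl(C_{g_j^i}(k;G)\bigr)_j$, completing the proof.

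The main obstacle is precisely this summation: in general $x\lhd_w y$ and $x'\lhd_w y'$ do not imply $x+x'\lhd_w y+y'$, since the subsets of coordinates that realize the sorted prefix sums of $y$ and of $y'$ may be different. Lemma~\ref{1.3} is engineered to sidestep this difficulty: it hands back the dominating vectors \emph{already sorted}, in a vertex order independent of $\sigma$, which is exactly the common-monotonicity hypothesis needed above; this should be emphasized in the write-up. A secondary point requiring some care is the bookkeeping of level sequences — verifying that the admissible index set is genuinely identical for $F$ and $G$ (it depends only on $k$, on $i$, and on the number of levels) and that Lemma~\ref{1.3} may legitimately be applied to each $\sigma$, including the unrealized ones, where the statement degenerates to $0\lhd_w 0$.
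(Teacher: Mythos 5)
Your proposal is correct and follows essentially the same route as the paper: decompose $C_{v_j^i}(k;F)$ over level sequences, apply Lemma~\ref{1.3} termwise, and use the fact that the greedy-side vectors are already sorted in a $\sigma$-independent coordinate order to justify summing the weak majorizations. The only cosmetic difference is that you sum over all combinatorially admissible level sequences with zero terms for unrealized ones, whereas the paper sums over $\mathcal{S}_i(k;F)\subset\mathcal{S}_i(k;G)$ and adds the extra nonnegative terms; your explicit statement of the additivity-under-common-monotonicity principle is a welcome clarification of a step the paper leaves implicit.
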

\begin{proof}
Since
$$C_{v_j^i}(k;F)=\sum\limits_{i_1i_2\cdots i_{k+1}\in \mathcal{S}(v_j^i,k;F)}C_{v_{j}^i}(i_1,\cdots,i_{k+1};F)=\sum\limits_{i_1i_2\cdots i_{k+1}\in \mathcal{S}_i(k;F)}C_{v_{j}^i}(i_1,\cdots,i_{k+1};F),$$
we have
\begin{eqnarray*}
\sum\limits_{j=1}^tC_{v_j^i}(k,F)
&=&\sum\limits_{i_1i_2\cdots i_{k+1}\in \mathcal{S}_i(k;F)}\sum\limits_{j=1}^tC_{v_{j}^i}(i_1,\cdots,i_{k+1};F)\\
&\leq&\sum\limits_{i_1i_2\cdots i_{k+1}\in \mathcal{S}_i(k;F)}\sum\limits_{j=1}^tC_{g_{j}^i}(i_1,\cdots,i_{k+1};G)~~(\mbox{by~ Lemma~\ref{1.3}})\\
&\leq&\sum\limits_{i_1i_2\cdots i_{k+1}\in \mathcal{S}_i(k;G)}\sum\limits_{j=1}^tC_{g_{j}^i}(i_1,\cdots,i_{k+1};G)~~( \mbox{by}~ \mathcal{S}_i(k;F)\subset\mathcal{S}_i(k;G))\\
&=&\sum\limits_{j=1}^t\sum\limits_{i_1i_2\cdots i_{k+1}\in \mathcal{S}_i(k;G)}C_{g_{j}^i}(i_1,\cdots,i_{k+1};G)\\
&=&\sum\limits_{j=1}^tC_{g_{j}^i}(k;G)
\end{eqnarray*}
for $1\leq t\leq l_i$.
And
\begin{eqnarray*}
C_{g_j^i}(k;G)
&=&\sum\limits_{i_1i_2\cdots i_{k+1}\in \mathcal{S}(g_j^i,k;G)}C_{g_{j}^i}(i_1,\cdots,i_{k+1};G)\\
&\geq&\sum\limits_{i_1i_2\cdots i_{k+1}\in \mathcal{S}(g_{j+1}^i,k;G)}C_{g_{j+1}^i}(i_1,\cdots,i_{k+1};G)~~(\mbox{by Lemma~\ref{1.3}})\\
&=&C_{g_{j+1}^i}(k;G)
\end{eqnarray*}
for $1\leq j\leq l_i-1$, since $\mathcal{S}(g_{j}^i,k;G)\supseteq \mathcal{S}(g_{j+1}^i,k;G)$.
\end{proof}
Denote by $\widehat{C}_{v^i_j,v^{i+1}_{j'}}(2k;F),~\widehat{C}_{v^i_j}(2k;F)$ the number of closed walks of length $2k$ in $F$ starting from $v^i_jv^{i+1}_{j'}$ and $v_j^i$, respectively, and the level sequences of the closed walks do not contain pairs $(0,0)$ and $(i,i-1)$ if $i>0$.
\begin{lemma}\label{1.5}
Let $F\in\mathcal{F}_{\pi}$ for some leveled degree sequence $\pi$ of a vertex-rooted forest, and let $G=F_{\pi}$
 be the associated leveled greedy forest. Let $v_1^i,\cdots,v_{l_i}^i$ and $g_1^i,\cdots,g_{l_i}^i$ be the vertices of $F$ and $G$ at the $i$-th
 level, respectively. Then the following relations hold for all $i$:
$$(\widehat{C}_{f(v_1^i),v_1^i}(2k;F),\cdots,\widehat{C}_{f(v_{l_i}^i),v_{l_i}^i}(2k;F))
\lhd_w(\widehat{C}_{f(g_1^i),g_1^i}(2k;G),\cdots,\widehat{C}_{f(g_{l_i}^i),g_{l_i}^i}(2k;G))$$
and
\begin{eqnarray*}
\widehat{C}_{f(g_1^i),g_1^i}(2k;G)&\ge&\widehat{C}_{f(g_2^i),g_2^i}(2k;G)\ge \cdots \ge \widehat{C}_{f(g_{l_i}^i),g_{l_i}^i}(2k;G).
\end{eqnarray*}
\end{lemma}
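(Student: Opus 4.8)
The plan is to imitate the proof of Lemma~\ref{lemma1.4}, the only new ingredient being that the walks here are forced to begin with a prescribed edge and, because the level pairs $(0,0)$ and $(i-1,i-2)$ are forbidden, never rise above level $i-1$, so they stay inside the subtree rooted at the starting vertex $f(v_j^i)$ (of level $i-1$; if $i=0$ there is nothing to prove). First I would refine by level sequences: for a level sequence $\vec\ell=(\ell_1,\dots,\ell_{2k+1})$ with $\ell_1=\ell_{2k+1}=i-1$, $\ell_2=i$, and no pair $(0,0)$ or $(i-1,i-2)$, write $\widehat C_{f(v_j^i),v_j^i}(\vec\ell;F)$ for the number of the walks counted by $\widehat C_{f(v_j^i),v_j^i}(2k;F)$ having level sequence $\vec\ell$, and let $\widehat{\mathcal S}(F)$ collect all such $\vec\ell$ occurring for some $j$; then $\widehat C_{f(v_j^i),v_j^i}(2k;F)=\sum_{\vec\ell\in\widehat{\mathcal S}(F)}\widehat C_{f(v_j^i),v_j^i}(\vec\ell;F)$, and likewise in $G=F_\pi$.

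The main step is the fixed-$\vec\ell$ statement
\begin{equation*}
\bigl(\widehat C_{f(v_j^i),v_j^i}(\vec\ell;F)\bigr)_{1\le j\le l_i}\ \lhd_w\ \bigl(\widehat C_{f(g_j^i),g_j^i}(\vec\ell;G)\bigr)_{1\le j\le l_i},
\end{equation*}
together with $\widehat C_{f(g_1^i),g_1^i}(\vec\ell;G)\ge\cdots\ge\widehat C_{f(g_{l_i}^i),g_{l_i}^i}(\vec\ell;G)$. To prove it I would cut each walk counted by $\widehat C_{f(v_j^i),v_j^i}(\vec\ell;F)$ at its visits to $u:=f(v_j^i)$ into maximal excursions; since the walk never rises above level $i-1$, each excursion is a closed walk based at a single child of $u$ that stays at levels $\ge i$ (hence inside that child's subtree), and the \emph{first} excursion is forced to be based at $v_j^i$. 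Letting $\vec m^{(1)},\dots,\vec m^{(r)}$ be the successive maximal segments of $\vec\ell$ lying strictly between two entries equal to $i-1$, this yields the product formula
\begin{equation*}
\widehat C_{f(v_j^i),v_j^i}(\vec\ell;F)=C_{v_j^i}\!\bigl(\vec m^{(1)};F\bigr)\cdot\prod_{q=2}^{r}\Bigl(\sum_{c:\,f(c)=u}C_c\!\bigl(\vec m^{(q)};F\bigr)\Bigr),
\end{equation*}
in which every factor is a quantity of the type in Lemma~\ref{1.3} at level $i$ (a walk from a level-$i$ vertex staying at levels $\ge i$ automatically remains in its own subtree). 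Grouped by father, the left-hand vector thus splits as the level-$i$ vectors of Lemma~\ref{1.3} multiplied by a ``father weight'' --- the product over $q\ge2$ in the displayed formula, depending only on $u$ and $\vec\ell$ --- which, by the monotonicity half of Lemma~\ref{1.3}, is largest in $G=F_\pi$ and nonincreasing along the greedy order of the fathers, while $G$ lists the children of these fathers as $g_1^i,g_2^i,\dots$ in consecutive father-blocks with $C_{g_j^i}(\vec m^{(1)};G)$ nonincreasing in $j$. Running these facts through the same rearrangement bookkeeping used to deduce Lemma~\ref{1.3}/Lemma~\ref{lemma1.4} then gives both the weak majorization and the monotonicity for fixed $\vec\ell$.

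It remains to sum over $\vec\ell$, exactly as at the end of the proof of Lemma~\ref{lemma1.4}. Since $F$ and $G$ have the same leveled degree sequence $\pi$, one has $\widehat{\mathcal S}(F)\subseteq\widehat{\mathcal S}(G)$ and the admissible level-sequence sets attached to $f(g_1^i),f(g_2^i),\dots$ form a nonincreasing chain, so for $1\le t\le l_i$
\begin{align*}
\sum_{j=1}^{t}\widehat C_{f(v_j^i),v_j^i}(2k;F)&=\sum_{\vec\ell\in\widehat{\mathcal S}(F)}\sum_{j=1}^{t}\widehat C_{f(v_j^i),v_j^i}(\vec\ell;F)\\
&\le\sum_{\vec\ell\in\widehat{\mathcal S}(G)}\sum_{j=1}^{t}\widehat C_{f(g_j^i),g_j^i}(\vec\ell;G)=\sum_{j=1}^{t}\widehat C_{f(g_j^i),g_j^i}(2k;G),
\end{align*}
which is the claimed weak majorization (the middle inequality uses the fixed-$\vec\ell$ statement and nonnegativity of all terms), and summing the fixed-$\vec\ell$ monotonicity over the chain gives $\widehat C_{f(g_1^i),g_1^i}(2k;G)\ge\cdots\ge\widehat C_{f(g_{l_i}^i),g_{l_i}^i}(2k;G)$. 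I expect the real obstacle to be the fixed-$\vec\ell$ step: the father weights couple the factors multiplicatively across levels $i-1$ and $i$, so one cannot simply quote Lemma~\ref{1.3} once --- one needs that the level-greedy forest is simultaneously extremal for every excursion block (a weighted refinement of Lemma~\ref{1.3}), and one must also pin down the role of the forbidden pair $(0,0)$ in the edge-rooted and multi-rooted cases, where a level-$(i-1)$ vertex need not have a unique father; that is where the argument goes beyond a mechanical copy of Lemma~\ref{lemma1.4}.
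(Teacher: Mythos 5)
Your setup is sound---the excursion decomposition at the visits to $u=f(v_j^i)$ and the resulting product formula for $\widehat C_{f(v_j^i),v_j^i}(\vec\ell\,;F)$ are correct, and you have correctly located the crux---but you then stop exactly at that crux, so there is a genuine gap. The fixed-$\vec\ell$ weak majorization you need is a statement about a termwise \emph{product} of several vectors: the first-excursion factor, indexed by level-$i$ vertices, times a father weight which is itself a product of $r-1$ factors indexed by level-$(i-1)$ vertices and then lifted to their children. Lemma~\ref{1.3} only gives weak majorization and monotonicity of each factor separately; weak majorization is not preserved under termwise products in general (it requires nonnegativity and that the dominating vectors be similarly ordered, and even then a specific product lemma), and the ``monotonicity half of Lemma~\ref{1.3}'' that you invoke says nothing at all about comparing the father weights of $F$ with those of $G$. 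The ``rearrangement bookkeeping used to deduce Lemma~\ref{lemma1.4}'' cannot supply this either, because that argument is purely additive (a sum over level sequences). You also need, and do not verify, that the father weights in $G$ are nonincreasing along the BFS order \emph{compatibly with the multiplicities} (each father repeated once per child), an extra combinatorial fact about the greedy forest. You flag all of this yourself as ``the real obstacle,'' but flagging it is not proving it.

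The paper sidesteps the many-factor coupling entirely: it inducts on $k$ via the convolution $\widehat C_{f(v_j^i),v_j^i}(2k+2;F)=\sum_{t=0}^{k}\widehat C_{v_j^i}(2t;F)\,\widehat C_{f(v_j^i)}(2k-2t;F)$, so that for each fixed $t$ only \emph{two} nonnegative, weakly majorized, similarly ordered vectors are multiplied termwise, and that single product step is exactly what Lemma~8 of \cite{AnS} licenses, with Lemma~\ref{1.3} supplying the hypotheses for both factors (since each $\widehat C_v(2t;\cdot)$ is a sum of $C_v(\,\cdot\,;\cdot)$ over a restricted set of level sequences). To salvage your route you would have to prove an $r$-factor version of that product lemma together with the multiplicity-compatibility of the father weights; alternatively, collapsing your excursion decomposition back to the two-factor convolution above reproduces the paper's argument.
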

\begin{proof}
we use induction on $k$. If $k=1$, then it is easy to find the assertion holds. Suppose that the assertion holds for the number not more than $k~(k\ge 1)$. Since
$$\widehat{C}_{f(v_j^i),v_j^i}(2k+2;F)=\sum_{t=0}^k\widehat{C}_{v_j^i}(2t;F)\cdot\widehat{C}_{f(v_j^i)}(2k-2t;T),$$
by Lemma $8$ in \cite{AnS} and Lemma~\ref{1.3}, we have
\begin{eqnarray*}
\sum_{j=1}^m\widehat{C}_{f(v_j^i),v_j^i}(2k+2;F)
\leq \sum_{t=0}^k\sum_{j=1}^m\widehat{C}_{g_j^i}(2t;G)\widehat{C}_{f(g_j^i)}(2k-2t;G)
=\sum_{j=1}^m\widehat{C}_{f(g_j^i),g_j^i}(2k+2;G)
\end{eqnarray*}
and
\begin{eqnarray*}
\widehat{C}_{f(g_j^i),g_j^i}(2k+2;G)
\geq\sum_{t=0}^k\widehat{C}_{g_{j+1}^i}(2t;G)\widehat{C}_{g_{j+1}^i}(2k-2t;G)
=\widehat{C}_{f(g_{j+1}^i),g_{j+1}^i}(2k+2;G).
\end{eqnarray*}
This completes the proof.
\end{proof}
\begin{lemma}\cite{AnS}\label{Root1}
Let $\pi$ be a leveled degree sequence of an edge-rooted tree and $G=T_{\pi}$ be the associated edge-rooted greedy tree. For any element $T\in \mathcal{T}_{\pi}$, we have
$$C_{v_1^0,v_2^0}(k;T)=C_{v_2^0,v_1^0}(k;T)\leq C_{g_1^0,g_2^0}(k;G)=C_{g_2^0,g_1^0}(k;G)$$
for any nonnegative integer $k$, where $v_1^0,~v_2^0$ and $g_1^0,~g_2^0$ are the roots of $T$ and $G$, respectively.
\end{lemma}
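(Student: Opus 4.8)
The plan is to reduce the edge-rooted statement to the vertex-rooted machinery of Lemmas~\ref{1.3}--\ref{lemma1.4} by cutting $T$ along its root edge. Let $T_1$ and $T_2$ be the two components of $T-v_1^0v_2^0$, viewed as vertex-rooted trees with roots $v_1^0$ and $v_2^0$, and set $A_j(x)=\sum_{t\ge 0}C_{v_j^0}(t;T_j)\,x^t$. Since $v_1^0v_2^0$ is a bridge, every closed walk of $T$ based at $v_1^0$ decomposes uniquely into primitive excursions, each of which either stays inside $T_1$ or has the form $v_1^0\,v_2^0\,(\text{closed walk at }v_2^0\text{ in }T_2)\,v_2^0\,v_1^0$; retaining the walks whose first excursion is of the second type and summing the corresponding geometric series gives
$$\sum_{k\ge 0}C_{v_1^0,v_2^0}(k;T)\,x^k=\frac{x^2A_1(x)A_2(x)}{1-x^2A_1(x)A_2(x)}=\sum_{m\ge 1}x^{2m}\bigl(A_1(x)A_2(x)\bigr)^{m}.$$
This makes the symmetry $C_{v_1^0,v_2^0}(k;T)=C_{v_2^0,v_1^0}(k;T)$ transparent, and, because the right-hand side has nonnegative coefficients as a power series in the coefficients of $A_1$ and of $A_2$, the quantity $C_{v_1^0,v_2^0}(k;T)$ is a nondecreasing function of the coefficients of the product $A_1A_2$. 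So it suffices to maximise $A_1(x)A_2(x)$ coefficientwise over all $T\in\mathcal{T}_\pi$.

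First I would fix how the data of $\pi$ is distributed between the two halves. For a fixed partition of $\pi$ into levelled degree sequences $\pi^{(1)},\pi^{(2)}$ of $T_1,T_2$, Lemma~\ref{lemma1.4} applied to the single-tree forest $T_j$ gives $C_{v_j^0}(t;T_j)\le C_{g_j^0}\bigl(t;(T_j)_{\pi^{(j)}}\bigr)$ for all $t$, where $(T_j)_{\pi^{(j)}}$ is the levelled greedy tree; equivalently $A_j$ is dominated coefficientwise by the power series $A_j^{\mathrm{gr}}$ of the greedy half. Hence, for each fixed split, $A_1A_2$ is dominated by the product formed from the two levelled greedy halves, and the task is reduced to comparing these products over all admissible splits.

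The remaining step, which I expect to be the crux, is to show that among all these products the one belonging to the split $\sigma^{*}$ realised by the edge-rooted greedy tree $G=T_\pi$ dominates every other one coefficientwise. I would establish this by a bubbling argument: if a split $\sigma$ differs from $\sigma^{*}$, interchange a pendant subtree hanging at the root of smaller degree with a suitable pendant subtree hanging at the root of larger degree (re-applying the levelled greedy construction afterwards, which only increases the relevant series), so as to move $\sigma$ one step towards $\sigma^{*}$; repeating this terminates at $\sigma^{*}$. The computation behind one interchange is that it changes $A_1^{-1}A_2^{-1}$ by exactly $(R_{\mathrm{big}}-R_{\mathrm{small}})(\beta-\alpha)$, where $R_{\mathrm{big}},R_{\mathrm{small}}$ are the primitive-excursion generating functions of the two swapped subtrees and $\alpha,\beta$ collect the primitive-excursion generating functions of the subtrees that remain at the larger and smaller root, respectively; since $R_{\mathrm{big}}-R_{\mathrm{small}}\succeq 0$ and, in the greedy configuration, $\alpha\succeq\beta$ coefficientwise, this change has nonpositive coefficients, so $A_1^{-1}A_2^{-1}$ does not increase and $A_1A_2$---hence every $C_{v_1^0,v_2^0}(k;T)$---does not decrease. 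The real work lies in the combinatorial bookkeeping: showing that a favourable interchange is always available and that the domination $\alpha\succeq\beta$ can be maintained throughout, which is itself a levelwise majorization statement in the spirit of Lemma~\ref{1.3}.

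Finally I would observe that applying the levelled greedy construction to both halves of $\sigma^{*}$ reproduces $G$ itself, since a BFS-ordering of $G$ restricts to BFS-orderings of its two halves; combined with the two reductions and the monotonicity from the first paragraph, this yields $C_{v_1^0,v_2^0}(k;T)\le C_{g_1^0,g_2^0}(k;G)$ for every $k$, as claimed. (An alternative to the split optimisation is an induction that deletes one deepest-level leaf at a time and tracks the closed-walk generating function at its parent; I expect this route to be heavier computationally but to sidestep the bookkeeping above.)
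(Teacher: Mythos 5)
The paper does not actually prove this lemma: it is imported verbatim from Andriantiana--Wagner \cite{AnS}, where the corresponding statement is obtained from the level-sequence refinement (the machinery behind Lemma~\ref{1.3}), i.e.\ by comparing $C_v(i_1,\dots,i_l;\cdot)$ for each fixed level sequence via exchange arguments. Your generating-function route is therefore genuinely different in flavour, and its first stage is sound: for a bridge the identity $\sum_k C_{v_1^0,v_2^0}(k;T)x^k=\sum_{m\ge1}x^{2m}\bigl(A_1(x)A_2(x)\bigr)^m$ is correct, it gives the symmetry $C_{v_1^0,v_2^0}=C_{v_2^0,v_1^0}$ and coefficientwise monotonicity in $A_1A_2$, and Lemma~\ref{lemma1.4} (with $i=0$, $l_0=1$) does let you replace each half by its levelled greedy version once the distribution of the levelled degree sequence between the two sides of the root edge is fixed.

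The genuine gap is that the entire difficulty of the lemma is concentrated in the step you defer: showing that the split realised by the edge-rooted greedy tree maximises $A_1A_2$ coefficientwise over all admissible splits. Your exchange identity, that one swap changes $A_1^{-1}A_2^{-1}$ by $(R_{\mathrm{big}}-R_{\mathrm{small}})(\beta-\alpha)$, is algebraically correct, but to use it you must (i) prove the coefficientwise domination $R_{\mathrm{big}}\succeq R_{\mathrm{small}}$ and $\alpha\succeq\beta$ for the configurations arising along the way --- this is exactly an ordering statement for excursion generating functions of branches in BFS order, i.e.\ a result of the same depth as Lemmas~\ref{1.5} and~\ref{Root2}, and in this paper's architecture the edge-rooted versions of those are proved \emph{using} Lemma~\ref{Root1}, so you must check you are not arguing in a circle; (ii) show that a favourable interchange always exists whenever the split is not the greedy one; and (iii) show that ``swap, then re-greedify both halves'' terminates. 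None of (i)--(iii) is carried out, and you yourself flag (i) and (ii) as ``the real work.'' As written, the proposal is a plausible programme whose unproved core is precisely the content of the lemma.
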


\begin{lemma}\label{Root2}
Let $F\in\mathcal{F}_{\pi}$ for some leveled degree sequence $\pi$ of an edge-rooted forest and $G=F_{\pi}$
 be the associated leveled greedy forest. Let $v_1^i,\cdots,v_{l_i}^i$ and $g_1^i,\cdots,g_{l_i}^i$ be the vertices of $T,~G$ at the $i$-th
 level, respectively. Then the following relations hold for all $i$:
$$(C_{f(v_1^i),v_1^i}(2k;F),\cdots,C_{f(v_{l_i}^i),v_{l_i}^i}(2k;F))
\lhd_w(C_{f(g_1^i),g_1^i}(2k;G),\cdots,C_{f(g_{l_i}^i),g_{l_i}^i}(2k;G))$$
and
$$C_{f(g_1^i),g_1^i}(2k;G)\ge C_{f(g_2^i),g_2^i}(2k;G)\geq \cdots\ge C_{f(g_{l_i}^i),g_{l_i}^i}(2k;G)).$$
\end{lemma}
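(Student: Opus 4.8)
The plan is to follow the same inductive scheme used for Lemma~\ref{1.5}, but now decomposing a closed walk counted by $C_{f(v^i_j),v^i_j}(2k;F)$ into pieces according to the last time the walk returns to the edge $f(v^i_j)v^i_j$ versus the pieces that stay ``above'' the edge (never using the step $(i,i-1)$ from $v^i_j$, and never revisiting the root pair $(0,0)$). Concretely, I would write a recursion expressing $C_{f(v^i_j),v^i_j}(2k;F)$ in terms of the ``restricted'' counts $\widehat{C}_{v^i_j}(2t;F)$ and $\widehat{C}_{f(v^i_j)}(2s;F)$ governed by Lemma~\ref{1.5}, together with the excursions back down through the father, which are controlled by the edge-rooted quantity $C_{f(v^i_j),f(f(v^i_j))}(\,\cdot\,;F)$ one level up; this last quantity is majorized in the greedy direction either by Lemma~\ref{Root1} (at the top edge) or by the inductive hypothesis of the present lemma applied to level $i-1$. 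Thus the lemma is proved by a double induction: an outer induction on the level $i$ (base case $i=0$ being exactly Lemma~\ref{Root1}, since the roots form the rooted edge), and, for each fixed $i$, an inner induction on $k$ as in Lemma~\ref{1.5}.

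The key steps, in order, are: (1) fix $i$ and assume the statement holds for all smaller levels; (2) set up the walk decomposition, separating the portion of a closed walk at $v^i_j$ that never steps down to $f(v^i_j)$ (counted by $\widehat{C}$) from the maximal subwalks that do step down — each such subwalk is a closed walk rooted at the edge $f(v^i_j)v^i_j$ that, after its first down-step, lives in the subtree of $F$ rooted at $f(v^i_j)$ and hence is governed by the level-$(i-1)$ edge-rooted count; (3) convert this into a convolution identity of the shape
\begin{eqnarray*}
C_{f(v^i_j),v^i_j}(2k;F)=\sum_{\text{compositions}}\Big(\prod \widehat{C}\text{-factors at }v^i_j\Big)\Big(\prod C\text{-factors at }f(v^i_j)\Big),
\end{eqnarray*}
i.e. essentially a product/sum over how the $2k$ steps split between ``excursions above $v^i_j$'' and ``excursions through the father edge''; (4) apply Lemma~\ref{1.5} to the $v^i_j$-factors and the outer inductive hypothesis (or Lemma~\ref{Root1}) to the $f(v^i_j)$-factors, using the elementary fact (the analogue of ``Lemma 8 in \cite{AnS}'') that a sum of termwise products of two sequences, each weakly majorized with the greedy one additionally sorted in nonincreasing order, is again weakly majorized — and simultaneously that the greedy sequence so obtained is itself nonincreasing; (5) finally observe that the set of admissible level sequences for $F$ is contained in that for $G$ (as in Lemma~\ref{lemma1.4}), which upgrades the per-level-sequence majorization to the full majorization; and for the monotonicity half, compare the $j$-th and $(j+1)$-th entries directly using the nested containment $\mathcal{S}(g^i_j,\cdot;G)\supseteq\mathcal{S}(g^i_{j+1},\cdot;G)$ together with the termwise inequalities from Lemma~\ref{1.5} and the greedy monotonicity one level up.

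The main obstacle I anticipate is making the walk decomposition in step (2)--(3) genuinely clean, because an arbitrary closed walk at the edge $f(v^i_j)v^i_j$ can cross that edge many times and can also wander laterally among siblings of $v^i_j$ before coming back; one must argue that factoring ``at the edge'' is legitimate — that between consecutive visits to $f(v^i_j)$ the walk is confined to a single well-defined rooted subtree, so that the counts genuinely convolve — and one must keep careful track of the two forbidden patterns $(0,0)$ and $(i,i-1)$ so that the $\widehat{C}$-pieces and the honest $C$-pieces fit together without double counting or omission. Once the bookkeeping of the convolution is pinned down, the majorization manipulation is routine given Lemmas~\ref{1.3}, \ref{1.5}, \ref{Root1} and the product-of-majorized-sequences fact; the subtlety is purely in the combinatorial factorization, and I expect the authors to handle it by first treating the contribution of walks that touch $f(v^i_j)$ exactly once and then iterating, exactly mirroring the $k=1$ base case and the $k\to k+1$ step of Lemma~\ref{1.5}.
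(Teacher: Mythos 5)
Your proposal matches the paper's proof in all essentials: the same convolution decomposition of $C_{f(v_j^i),v_j^i}(2k+2;F)$ into a $\widehat{C}$-piece at the edge $f(v_j^i)v_j^i$ (controlled by Lemma~\ref{1.5}) times a closed walk at the father edge $f(v_j^i)f^2(v_j^i)$ (controlled by Lemma~\ref{Root1} at the root edge and by the induction hypothesis otherwise), combined via the sum-of-termwise-products majorization fact. The only cosmetic difference is that the paper runs a single induction on $k$ with the statement quantified over all levels $i$ (the father-edge term has strictly smaller length, so this suffices), rather than your explicit double induction on $(i,k)$.
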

\begin{proof}
Induction on $k$, if $k=1$, then it is easy to find the assertion holds. Suppose that the assertion holds for the number not more than $k~(k\ge 1)$. Without loss of generality, we can suppose that $C_{f(v_1^i),v_1^i}(2k+2; F)\ge \cdots\ge C_{f(v_{l_i}^i),v_{l_i}^i}(2k+2;F)$, otherwise, we can change the label of the vertex in $T$. We divide the following two cases:\\
{\bf Case 1}: $i=1$, we need to prove
$$(C_{f(v_1^1),v_1^1}(2k+2;F), \cdots,C_{f(v_{l_1}^1),v_{l_1}^1}(2k+2;F))
\lhd_w(C_{f(g_1^1),g_1^1}(2k+2;G),\cdots,C_{f(g_{l_1}^1),g_{l_1}^1}(2k+2;G))$$
and
$$C_{f(g_1^1),g_1^1}(2k+2;G)\ge C_{f(g_2^1),g_2^1}(2k+2;G)\geq \cdots\ge C_{f(g_{l_i}^1),g_{l_1}^1}(2k+2;G)).$$
If $f(v_j^1)=v_1^0$, then
$$C_{v_1^0,v_j^1}(2k+2;T)=\sum_{t=1}^k\widehat{C}_{v_1^0,v_j^1}(2t;T)\cdot C_{v_1^0,v_2^0}(2k+2-2t;T)
+\widehat{C}_{v_1^0,v_j^1}(2k+2;T).$$
If $f(v_j^1)=v_2^0$, then
$$C_{v_2^0,v_j^1}(2k+2;T)=\sum_{t=1}^k\widehat{C}_{v_2^0,v_j^1}(2t;T)\cdot C_{v_2^0,v_1^0}(2k+2-2t;T)
+\widehat{C}_{v_2^0,v_j^1}(2k+2;T).$$
By the Lemmas~\ref{1.5}, ~\ref{Root1} and the induction hypothesis, we have
\begin{eqnarray*}
&&\sum_{j=1}^mC_{f(v_j^1),v_j^1}(2k+2;T)\\
&&=\sum_{j=1}^m[\sum_{t=1}^k\widehat{C}_{f(v_j^1),v_j^1}(2t;T)\cdot C_{v_1^0,v_2^0}(2k+2-2t;T)+\widehat{C}_{f(v_j^1),v_j^1}(2k+2;T)]\\
&&\le\sum_{j=1}^m[\sum_{t=1}^k\widehat{C}_{f(g_j^1),g_j^1}(2t;G)\cdot C_{g_1^0,g_2^0}(2k+2-2t;G)+\widehat{C}_{f(g_j^1),g_j^1}(2k+2;G)]\\
&&=\sum_{j=1}^mC_{f(g_j^1),g_j^1}(2k+2;G)
\end{eqnarray*}
and
\begin{eqnarray*}
&&{C}_{f(g_j^1),g_j^1}(2k+2;G)\\
&&=\sum_{t=1}^k\widehat{C}_{f(g_j^1),g_j^1}(2t;G)\cdot C_{g_1^0,g_2^0}(2k+2-2t;G)+\widehat{C}_{f(g_j^1),g_j^1}(2k+2;G)\\
&&\ge\sum_{t=1}^k\widehat{C}_{f(g_{j+1}^1),g_{j+1}^1}(2t;G)\cdot C_{g_1^0,g_2^0}(2k+2-2t;G)+\widehat{C}_{f(g_{j+1}^1),g_{j+1}^1}(2k+2;G)\\
&&={C}_{f(g_{j+1}^1),g_{j+1}^1}(2k+2;G)
\end{eqnarray*}
{\bf Case 2}: $i\ge 2$.
Since
$$C_{f(v_j^i),v_j^i}(2k+2;F)=\sum_{t=1}^k\widehat{C}_{f(v_j^i),v_j^i}(2t;F)\cdot C_{f(v_i^j),f^2(v_j^i)}(2k+2-2t;F)+\widehat{C}_{f(v_j^i),v_j^i}(2k+2;F),$$
by  Lemmas~\ref{1.5}, \ref{Root1} and the induction hypothesis, we have
\begin{eqnarray*}
&&\sum_{j=1}^mC_{f(v_j^i),v_j^i}(2k+2;T)\\
&&=\sum_{t=1}^k\sum_{j=1}^m\widehat{C}_{f(v_j^i),v_j^i}(2t;T)
C_{f(v_j^i),f^2(v_j^i)}(2k+2-2t;T)+\sum_{j=1}^m\widehat{C}_{f(v_j^i),v_j^i}(2k+2;T)\\
&&\leq\sum_{t=1}^k\sum_{j=1}^m\widehat{C}_{f(g_j^i),g_j^i}(2t;G)
C_{f(g_j^i),f^2(g_j^i)}(2k+2-2t;G)+\sum_{j=1}^m\widehat{C}_{f(g_j^i),g_j^i}(2k+2;G)\\
&&=\sum_{j=1}^m[\sum_{t=1}^k\widehat{C}_{f(g_j^i),g_j^i}(2t;G)
C_{f(g_j^i),f^2(g_j^i)}(2k+2-2t;G)+\widehat{C}_{f(g_j^i),g_j^i}(2k+2;G)]\\
&&=\sum_{j=1}^mC_{f(g_j^i),g_j^i}(2k+2;G)
\end{eqnarray*}
and
\begin{eqnarray*}
&&\widehat{C}_{f(g_j^i),g_j^i}(2k+2;G)\\
&&=\sum_{t=1}^k\widehat{C}_{f(g_j^i),g_j^i}(2t;G)\widehat{C}_{f(g_j^i),f^2(g_j^i)}(2k+2-2t;G)+\widehat{C}_{f(g_j^i),g_j^i}(2k+2;G)\\
&&\geq\sum_{t=1}^k\widehat{C}_{f(g_{j+1}^i),g_{j+1}^i}(2t;G)\widehat{C}_{f(g_{j+1}^i),f^2(g_{j+1}^i)}(2k+2-2t;G)+\widehat{C}_{f(g_{j+1}^i),g_{j+1}^i}(2k+2;G)\\
&&=\widehat{C}_{f(g_{j+1}^i),g_{j+1}^i}(2k+2;G).
\end{eqnarray*}
This completes the proof.
\end{proof}
\begin{lemma}\label{Edgeroot}
Let $T\in\mathcal{F}_{\pi}$ for some leveled degree sequence $\pi$ of an edge-rooted forest, and let $G=F_{\pi}$
 be the associated leveled greedy forest. Let $v_1^i,\cdots,v_{l_i}^i$ and $g_1^i,\cdots,g_{l_i}^i$ be the vertices of $F$ and $G$ at the $i$-th
 level, respectively. Then
\begin{eqnarray*}
(C_{v_1^i}(2k;F),\cdots,C_{v_{l_i}^i}(2k;F))\lhd_w (C_{g_1^i}(2k;G),\cdots,C_{g_{l_i}^i}(2k;G))
\end{eqnarray*}
and
$$C_{g_1^i}(2k;G)\geq C_{g_2^i}(2k;G)\ge \cdots\geq  C_{g_{l_i}^i}(2k;G).$$
\end{lemma}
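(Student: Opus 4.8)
The plan is to reduce Lemma~\ref{Edgeroot} to the two edge-rooted results already in hand --- Lemma~\ref{Root1} for the root edge and Lemma~\ref{Root2} for the quantities $C_{f(v),v}(2k;\cdot)$ --- together with the observation that the restricted counts $\widehat{C}_{v}(2k;\cdot)$ inherit majorization from Lemma~\ref{1.3} in exactly the way Lemma~\ref{lemma1.4} does. The device is a decomposition of a closed walk at $v=v_j^i$ relative to the edge $e=v\,f(v)$ joining $v$ to its parent ($i\ge 1$; for $i=0$ the root edge plays this role). Peeling off the ``primitive excursions'' that leave $v$ for a child's subtree and return, and using the reverse-and-rotate bijection which identifies the set of closed walks at $v$ of length $2k$ whose first step is $v\to f(v)$ with the walks counted by $C_{f(v),v}(2k;F)$ in Lemma~\ref{Root2}, one arrives at the identity
\begin{equation*}
C_{v_j^i}(2k;F)=\widehat{C}_{v_j^i}(2k;F)+\sum_{t=1}^{k}C_{f(v_j^i),v_j^i}(2t;F)\,\widehat{C}_{v_j^i}(2k-2t;F),
\end{equation*}
valid in $F$ and, verbatim, in $G$.

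Then I would assemble three majorization facts, each with a \emph{nonincreasing} right-hand side indexed by the BFS order $g_1^i,\ldots,g_{l_i}^i$. (i) Summing the level-sequence-wise bound of Lemma~\ref{1.3} over all level sequences avoiding the pairs $(0,0)$ and $(i,i-1)$ --- a combinatorial family, the same in $F$ and in $G$ --- just as Lemma~\ref{lemma1.4} follows from Lemma~\ref{1.3}, gives $(\widehat{C}_{v_1^i}(2s;F),\ldots,\widehat{C}_{v_{l_i}^i}(2s;F))\lhd_w(\widehat{C}_{g_1^i}(2s;G),\ldots,\widehat{C}_{g_{l_i}^i}(2s;G))$ for every $s$. (ii) Lemma~\ref{Root2} gives the analogous statement for $C_{f(v_j^i),v_j^i}(2t;\cdot)$. (iii) An elementary lemma (all vectors nonnegative): if $x\lhd_w u$ and $y\lhd_w w$ with $u,w$ nonincreasing, then the componentwise product satisfies $x\circ y\lhd_w u\circ w$, and likewise $x+y\lhd_w u+w$; moreover both $u\circ w$ and $u+w$ remain nonincreasing. (The product statement follows from the rearrangement inequality together with Abel summation.) Applying (i)--(iii) term by term to the displayed identity --- to the pure $\widehat{C}$ term and, for each $t$, to the product $C_{f(\cdot),\cdot}(2t;\cdot)\circ\widehat{C}_{\cdot}(2k-2t;\cdot)$ --- and adding up over $t$ yields Lemma~\ref{Edgeroot} for every $i\ge 1$, the greedy-side vector being nonincreasing because in $G$ all the vectors involved are co-sorted.

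For the base level $i=0$, the two vertices are the endpoints $v_1^0,v_2^0$ of the root edge; the same decomposition applies with $v_2^0$ replacing $f(v)$, with $\widehat{C}_{v_p^0}(2s;F)$ now counting closed walks at $v_p^0$ that avoid the root edge, and with $C_{v_1^0,v_2^0}(2t;\cdot)$ --- bounded by Lemma~\ref{Root1} --- replacing $C_{f(v),v}(2t;\cdot)$. The only missing piece is $(\widehat{C}_{v_1^0}(2s;F),\widehat{C}_{v_2^0}(2s;F))\lhd_w(\widehat{C}_{g_1^0}(2s;G),\widehat{C}_{g_2^0}(2s;G))$: since $\widehat{C}_{v_p^0}(2s;F)$ is the number of closed walks at $v_p^0$ in the vertex-rooted forest obtained from $F$ by deleting the root edge, this follows from Lemma~\ref{lemma1.4}, provided one first checks that deleting the root edge from the edge-rooted greedy forest $G$ produces exactly the vertex-rooted greedy forest of the resulting leveled degree sequence --- which holds because the greedy/BFS attachment at every level distributes the same high-degree vertices in the same way whether or not the two level-$0$ vertices are joined (the root edge only lowers each root's remaining capacity by one).

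The finicky part, I expect, will be the two reductions that make this clean skeleton legitimate: checking the walk-decomposition identity (especially the reverse-and-rotate bijection, whose bookkeeping depends on the exact conventions behind $C_{f(v),v}$ and $\widehat{C}_{\cdot}$), and the level-$0$ compatibility between the edge-rooted and vertex-rooted greedy constructions. Neither is deep, but both are exactly where the edge-rooted setting departs from the vertex-rooted one; should the level-$0$ appeal to Lemma~\ref{lemma1.4} prove awkward, it can instead be handled by a direct induction on $k$ parallel to the proof of Lemma~\ref{Root2}, decomposing the level-$0$ walks at their first crossing of the root edge.
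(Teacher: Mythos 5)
Your proposal is correct and follows essentially the same route as the paper: the same decomposition of a closed walk at $v_j^i$ according to its excursions across the parent edge (the root edge when $i=0$), combined with Lemma~\ref{Root1}, Lemma~\ref{Root2}, the $\widehat{C}$-analogue of Lemma~\ref{lemma1.4}, and the product inequality for weakly majorized nonnegative sequences with co-sorted dominating vectors. The only differences are organizational: the paper runs an induction on $k$ where you combine the already-for-all-$k$ ingredients directly, and you make explicit the level-$0$ compatibility between the edge-rooted and vertex-rooted greedy constructions that the paper leaves implicit.
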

\begin{proof}
Induction on $k$. If $k=1$, then it is easy to find the assertion holds. Suppose that the assertion holds for the number not more than $k~(k\ge 1)$. Without loss of generality, we can suppose that $C_{v_1^i}(2k+2;T)\ge \cdots\ge C_{v_{l_i}^i}(2k+2;T)$, otherwise, we can change the label of the vertex in $T$. We divide the following two cases:\\
{\bf Case 1}: $i=0.$
Since
$$C_{v_1^0}(2k+2;T)=\sum_{t=1}^k\widehat{C}_{v_1^0}(2t;T)C_{v_1^0,v_2^0}(2k+2-2t;T)
+\widehat{C}_{v_1^0}(2k+2;T)+C_{v_1^0,v_2^0}(2k+2;T)$$
$$C_{v_2^0}(2k+2;T)=\sum_{t=1}^k\widehat{C}_{v_2^0}(2t;T)C_{v_2^0,v_1^0}(2k+2-2t;T)
+\widehat{C}_{v_2^0}(2k+2;T)+C_{v_2^0,v_1^0}(2k+2;T),$$
 by Lemmas~\ref{lemma1.4}, ~\ref{Root2} and the induction hypothesis, we have
\begin{eqnarray*}
&&C_{v_1^0}(2k+2;T)\\
&&=\sum_{t=1}^k\widehat{C}_{v_1^0}(2t;T)C_{v_1^0,v_2^0}(2k+2-2t;T)
+\widehat{C}_{v_1^0}(2k+2;T)+C_{v_1^0,v_2^0}(2k+2;T)\\
&&\leq\sum_{t=1}^k\widehat{C}_{g_1^0}(2t;G)C_{g_1^0,g_2^0}(2k+2-2t;G)
+\widehat{C}_{g_1^0}(2k+2;G)+C_{g_1^0,g_2^0}(2k+2;G)\\
&&=C_{g_1^0}(2k+2;G)
\end{eqnarray*}
and
\begin{eqnarray*}
&&C_{v_1^0}(2k+2;T)+C_{v_2^0}(2k+2;T)\\
&&=\sum_{j=1}^2\sum_{t=1}^k\widehat{C}_{v_j^0}(2t;T)C_{v_{j}^0,v_{j'}^0}(2k+2-2t;T)+\sum_{j=1}^2\widehat{C}_{v_j^0}(2k+2;T)+\sum_{j=1}^2C_{v_j^0,v_{j'}^0}(2k+2;T)\\
&&\leq\sum_{t=1}^k\sum_{j=1}^2\widehat{C}_{g_j^0}(2t;G)C_{g_j^0,g_{j'}^0}(2k+2-2t;G)
+\sum_{j=1}^2\widehat{C}_{g_j^0}(2k+2;G)+\sum_{j=1}^2C_{g_j^0,g_{j'}^0}(2k+2;G)\\
&&=C_{g_1^0}(2k+2;G)+C_{g_2^0}(2k+2;G).
\end{eqnarray*}

{\bf Case 2}: $i\geq2$.
Since
$$C_{v_j^i}(2k+2;F)=C_{f(v_j^i),v_j^i}(2k+2;F)+\sum_{t=1}^k\widehat{C}_{v_j^i}(2t;F)
C_{f(v_j^i),v_j^i}(2k+2-2t;F)
+\widehat{C}_{v_j^i}(2k+2;F),$$
 by Lemmas~\ref{lemma1.4}, ~\ref{Root2} and the induction hypothesis, we have
\begin{eqnarray*}
&&\sum_{j=1}^mC_{v_j^i}(2k+2;F)\\
&&=\sum_{j=1}^mC_{f(v_j^i),v_j^i}(2k+2;F)+\sum_{t=1}^k\sum_{j=1}^m\widehat{C}_{v_j^i}(2t;F)
C_{f(v_j^i),v_j^i}(2k+2-2t;F)
+\sum_{j=1}^m\widehat{C}_{v_j^i}(2k+2;F)\\
&&\leq\sum_{j=1}^mC_{f(g_j^i),g_j^i}(2k+2;G)+\sum_{t=1}^k\sum_{j=1}^m\widehat{C}_{g_j^i}(2t;G)C_{f(g_j^i),g_j^i}(2k+2-2t;G)
+\sum_{j=1}^m\widehat{C}_{g_j^i}(2k+2;G)\\
&&=\sum_{j=1}^mC_{g_j^i}(2k+2;G)
\end{eqnarray*}
and
\begin{eqnarray*}
&&C_{g_j^i}(2k+2;G)\\
&&=C_{f(g_j^i),g_j^i}(2k+2;G)+\sum_{t=1}^k\widehat{C}_{g_j^i}(2t;G)C_{f(g_j^i),g_j^i}(2k+2-2t;G)
+\widehat{C}_{g_j^i}(2k+2;G)\\
&&\geq C_{f(g_{j+1}^i),g_{j+1}^i}(2k+2;G)+\sum_{t=1}^k\widehat{C}_{g_{j+1}^i}(2t;G)C_{f(g_{j+1}^i),g_{j+1}^i}(2k+2-2t;G)
+\widehat{C}_{g_{j+1}^i}(2k+2;G)\\
&&=C_{g_{j+1}^i}(2k+2;G).
\end{eqnarray*}
This completes the proof.
\end{proof}

\begin{theorem}\label{2keven}
Let $T\in\mathcal{T}_{\pi}$ for the leveled degree sequence $\pi$, and let $G=T_{\pi}^*$
 be the associated greedy tree. Then for any positive integer $k$,
\begin{eqnarray}
C(2k;T)\lhd_w C(2k;G)
\end{eqnarray}
Moreover, the majorization is strict for sufficiently large even $k$ if $T$ and $T_{\pi}^*$ are not isomorphic
\end{theorem}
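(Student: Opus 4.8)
The plan is to reduce the unrooted estimate to the rooted--forest estimates already available (Lemmas~\ref{lemma1.4} and~\ref{Edgeroot}) and then to bridge the gap between an arbitrary tree and the greedy tree $T_\pi^*$. Since a tree is bipartite, only lengths $2k$ occur among closed walks, so the assertion for even lengths is all there is; I will work throughout with a fixed $k$.

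First I would root $T$ at a centroid (choosing it as a single vertex or as an edge according to whether the centroid of $T$ is a vertex or an edge) and root $G=T_\pi^*$ at its canonical root $g_0$, so that $G$ becomes the leveled greedy tree for its own leveled degree sequence. If $T$ is rooted at a vertex $v_0$, then $(T,v_0)$ has some leveled degree sequence $\sigma$, and Lemma~\ref{lemma1.4} gives, for every level $i$, $(C_{v^i_1}(2k;T),\cdots,C_{v^i_{l_i}}(2k;T))\lhd_w(C_{g^i_1}(2k;F_\sigma),\cdots,C_{g^i_{l_i}}(2k;F_\sigma))$, where $F_\sigma$ is the leveled greedy tree with leveled degree sequence $\sigma$ and the level sizes agree on the two sides. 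Because weak majorization of nonnegative sequences is stable under direct sums --- the sum of the $r$ largest entries of a concatenation is the maximum over splits $r=s+(r-s)$ of the sum of the $s$ largest of the first piece and the $r-s$ largest of the second --- concatenating these relations over all levels yields $C(2k;T)=C(2k;F_\sigma)$-dominance in the form $C(2k;T)\lhd_w C(2k;F_\sigma)$ on the whole vertex set. The case in which $T$ is rooted at an edge is identical, with Lemma~\ref{Edgeroot} replacing Lemma~\ref{lemma1.4} and Lemma~\ref{Root2} supplying the corresponding root estimate.

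It remains to compare the leveled greedy tree $F_\sigma$, which realizes the tree degree sequence $\pi$, with the greedy tree $T_\pi^*$, i.e. to prove $C(2k;F_\sigma)\lhd_w C(2k;T_\pi^*)$. This is the heart of the matter, and I expect it to be the main obstacle: $F_\sigma$ and $T_\pi^*$ generally have different numbers of levels (this already happens on small degree sequences such as $(3,3,2,1,1,1,1)$, whose non-greedy tree is a leveled greedy tree under every rooting yet is not $T_\pi^*$), so the per-level machinery cannot be applied directly. I would handle it by showing that $T_\pi^*$ is extremal among all leveled greedy trees with tree degree sequence $\pi$: starting from $F_\sigma$, repeatedly perform a degree-preserving exchange that moves a vertex of the largest available degree, together with the branch hanging from it, to a strictly smaller level --- equivalently a $2$-switch that renders the BFS-order more nearly non-increasing across levels --- until $T_\pi^*$ is reached, and argue that each such step does not decrease $C(2k;\cdot)$ in the $\lhd_w$ order. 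The monotonicity of one step would be proved by induction on $k$, splitting a closed walk of length $2k$ through the moved vertex into its excursions into each branch and into the rest of the tree and comparing term by term, in the spirit of the proofs of Lemmas~\ref{1.5}, \ref{Root2} and~\ref{Edgeroot}; the delicate point, absent from the within-level swaps already treated, is that a level-reducing move changes the admissible level sequences, and one must check that the gain from bringing mass nearer the root outweighs that change. (An alternative to the exchange argument is an induction on $n$: delete the root $g_0$ of $T_\pi^*$ and a suitably chosen vertex of $F_\sigma$, apply the inductive hypothesis to the resulting rooted forests, and reassemble using the walk-decomposition identities together with Lemmas~\ref{Root2} and~\ref{Edgeroot}.)

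For the strictness assertion, take $t=n-1$, where the partial-sum inequality reads $M_{2k}(T)\le M_{2k}(T_\pi^*)$, that is $\sum_{i=1}^n\lambda_i(T)^{2k}\le\sum_{i=1}^n\lambda_i(T_\pi^*)^{2k}$. Since $T_\pi^*$ is the unique maximizer of the spectral radius within $\mathcal{T}_\pi$ (see \cite{BJ}), $T\not\cong T_\pi^*$ forces $\lambda_1(T)<\lambda_1(T_\pi^*)$; as $M_{2k}(T)\le n\,\lambda_1(T)^{2k}$ while $M_{2k}(T_\pi^*)\ge\lambda_1(T_\pi^*)^{2k}$, the quotient $M_{2k}(T_\pi^*)/M_{2k}(T)$ tends to infinity, so the inequality at $t=n-1$ is strict for all sufficiently large $k$, which is exactly the stated strictness.
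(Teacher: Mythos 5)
Your first step --- rooting $T$, invoking Lemma~\ref{lemma1.4} (resp.\ Lemma~\ref{Edgeroot}) level by level, and concatenating the per-level weak majorizations --- reproduces exactly one iteration of the paper's argument and is sound. The genuine gap is the bridge you yourself flag as ``the heart of the matter'': passing from the leveled greedy tree $F_\sigma$ of one particular rooting to the greedy tree $T_\pi^*$. You leave this as an unproven sketch (degree-preserving, level-changing branch exchanges, with the admitted unresolved difficulty that such a move alters the admissible level sequences), and nothing in the paper's toolkit covers such moves: the exchanges of Theorems~\ref{VertexM} and~\ref{EdgeM} change the degree sequence and serve Theorem~\ref{Maintheorem2}, not this step. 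The paper closes the gap differently and without any new exchange lemma: it \emph{iterates} the re-rooting. If the current tree $T_l$ fails to be level greedy with respect to \emph{some} choice of vertex or edge root, one re-roots there and replaces $T_l$ by the leveled greedy tree $T_{l+1}$ for that new leveled degree sequence; Lemmas~\ref{lemma1.4} and~\ref{Edgeroot} give $C(2k;T_l)\lhd_w C(2k;T_{l+1})$ at each step, Theorem~20 of \cite{AnS} guarantees the process terminates, and the terminal tree --- being level greedy under \emph{every} vertex and edge rooting --- has the ``semi-regular'' property of \cite{SzW} and is therefore $T_\pi^*$. That characterization of the fixed point is the missing idea in your proposal. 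Incidentally, your motivating example is not one: for $\pi=(3,3,2,1,1,1,1)$ the unique non-greedy tree (the double broom) fails to be level greedy when rooted at the edge joining the degree-$2$ vertex to a degree-$3$ vertex, which is precisely why the iteration does not stall there. Your strictness argument via $\lambda_1(T)<\lambda_1(T_\pi^*)$ and $M_{2k}(T)\le n\lambda_1(T)^{2k}$ is correct and is if anything more explicit than the paper's appeal to \cite{AnS}.
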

\begin{proof}
If possible to choose an edge or a vertex as root such that $T$ is not level greedy, then choose the edge or vertex as root to get
$T_1$ being the level greedy tree with the same leveled degree sequence $\pi$. We iterate this process: if
an edge or a vertex root can be chosen such that $T_l$ is not level greedy, choose the edge or vertex as root to get a
level greedy tree, which we denote by $T_{l+1}$. Then, by Theorem~$20$ in \cite{AnS}, no infinite loops are possible in this process.
By Lemma~\ref{lemma1.4}, Lemma~\ref{Edgeroot} and Theorems~$15,~19$ in \cite{AnS}, we have
\begin{eqnarray*}
C(2k;T_l)\lhd_w C(2k;T_{l+1})
\end{eqnarray*}
Moreover, the Majorization is strict for sufficiently large $k$.
Hence there exists an integer $m$ such  that $T_m$ is level greedy with respect to any choice of vertex
or edge root. This tree $T_m$  satisfies the ※semi-regular§ property defined in \cite{SzW}, and hence it is a
greedy tree. This completes the proof.
\end{proof}
Since the number of closed walks in a tree with length odd is zero, by Theorem~\ref{2keven}, Theorem~\ref{Maintheorem1} holds. Therefore, we finish the proof of Theorem~\ref{Maintheorem1}.
\section{The Proof of Theorem~\ref{Maintheorem2}}
In order to prove Theorem~\ref{Maintheorem2}, we need the following Lemma.
\begin{lemma}\label{Majorizationlemma}
Let $\alpha=(a_1,...,a_n)$, $\beta=(b_1,...,b_n)$, $V_1\subset \{1, \cdots, n\}$ and $\varphi$ be a bijective map from $\{1, \cdots, n\}$ to $\{1,\cdots, n\}$ such that $(1)$ $\varphi(V_1)\cap V_1=\phi$.
$(2)$ $a_i\le b_i$ for $i\notin V_1$; $a_i+a_{\varphi(i)}\le b_i+b_{\varphi(i)}$ and $a_i\le a_{\varphi(i)}$  for $i\in V_1$. Then $\alpha\lhd_w \beta$.
\end{lemma}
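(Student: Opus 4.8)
The plan is to check weak majorization directly from its definition. For nonnegative sequences $\alpha=(a_1,\dots,a_n)$ and $\beta=(b_1,\dots,b_n)$, the sum of the $t$ largest entries of $\alpha$ equals $\max_{|S|=t}\sum_{i\in S}a_i$, so the statement $\alpha\lhd_w\beta$ is equivalent to $\max_{|S|=t}\sum_{i\in S}a_i\le\max_{|T|=t}\sum_{j\in T}b_j$ for every $t\in\{1,\dots,n\}$. Hence it suffices to show: for each $t$ and each $S\subseteq\{1,\dots,n\}$ with $|S|=t$, there is a set $T$ with $|T|=t$ and $\sum_{i\in S}a_i\le\sum_{j\in T}b_j$. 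I would build $T$ from $S$ by a ``swap'' along $\varphi$: whenever an index lies in $V_1$ but its $\varphi$-image is not already in $S$, replace it by that image.

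First I would record a consequence of hypothesis $(1)$: since $\varphi(V_1)\cap V_1=\emptyset$, $\varphi$ maps $V_1$ into the complement of $V_1$, so for every $i\in V_1$ the index $\varphi(i)$ satisfies $a_{\varphi(i)}\le b_{\varphi(i)}$ by the ``$i\notin V_1$'' clause of $(2)$. Now fix $S$ with $|S|=t$ and partition $S=C\sqcup A\sqcup B$, where $C=S\setminus V_1$, $A=\{i\in S\cap V_1:\varphi(i)\in S\}$ and $B=\{i\in S\cap V_1:\varphi(i)\notin S\}$. Put $T=(S\setminus B)\cup\varphi(B)$. Because $\varphi(B)\cap S=\emptyset$ by the definition of $B$ and $\varphi$ is injective, $|T|=|S|-|B|+|\varphi(B)|=t$.

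To estimate, split $C$ further: $\varphi$ restricted to $A$ is a bijection of $A$ onto a subset $C_1:=\varphi(A)\subseteq C$ (it lands in $S$ by the definition of $A$ and outside $V_1$ by $(1)$), and set $C_2=C\setminus C_1$, so that $T=A\cup C_1\cup C_2\cup\varphi(B)$ is a disjoint union. The three groups of terms are then bounded independently: on $A$ paired with $C_1$, $\sum_{i\in A}a_i+\sum_{j\in C_1}a_j=\sum_{i\in A}\big(a_i+a_{\varphi(i)}\big)\le\sum_{i\in A}\big(b_i+b_{\varphi(i)}\big)=\sum_{i\in A}b_i+\sum_{j\in C_1}b_j$ by the pair inequality in $(2)$; on $C_2\subseteq S\setminus V_1$, $\sum_{j\in C_2}a_j\le\sum_{j\in C_2}b_j$; and on $B$, $\sum_{i\in B}a_i\le\sum_{i\in B}a_{\varphi(i)}\le\sum_{j\in\varphi(B)}b_j$, using $a_i\le a_{\varphi(i)}$ for $i\in V_1$ together with the first observation. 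Summing the three lines yields $\sum_{i\in S}a_i\le\sum_{j\in T}b_j$; choosing $S$ to realize the $t$ largest entries of $\alpha$ then gives inequality $(\ref{M-Inequality})$ for that $t$, and letting $t$ range over $\{1,\dots,n\}$ proves $\alpha\lhd_w\beta$.

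I expect the only delicate point to be the bookkeeping that makes $|T|=t$ and guarantees that $A$, $C_1$, $C_2$, $\varphi(B)$ are pairwise disjoint with union $T$, so that every index and every hypothesis is used exactly once; hypothesis $(1)$ is precisely what prevents the swap from cascading, since no chain $i\mapsto\varphi(i)\mapsto\varphi^2(i)$ can stay inside $V_1$, which is what makes the partition behave cleanly. Everything else is a direct assembly of the two inequalities in hypothesis $(2)$.
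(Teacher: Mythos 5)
Your proof is correct, and it takes a genuinely different route from the paper's. The paper argues by induction on $|V_1|$: the base case $|V_1|=1$ is disposed of with a brief case analysis on whether the top-$t$ block of $\alpha$ contains $a_{\varphi(i)}$, and the inductive step passes through an intermediate sequence $\alpha'$ (equal to $\alpha$ except that $a_{i_0},a_{\varphi(i_0)}$ are replaced by $b_{i_0},b_{\varphi(i_0)}$) and chains $\alpha\lhd_w\alpha'\lhd_w\beta$ using transitivity of weak majorization. You instead give a one-shot combinatorial argument: for an arbitrary index set $S$ of size $t$ you construct a comparison set $T$ of the same size by swapping each $i\in S\cap V_1$ whose partner $\varphi(i)$ lies outside $S$ for that partner, and then bound $\sum_{S}a_i\le\sum_{T}b_j$ by splitting $S$ into the paired part $A\sqcup\varphi(A)$, the leftover part $C_2$ of $S\setminus V_1$, and the swapped part $B$ — each piece controlled by exactly one clause of hypothesis $(2)$, with hypothesis $(1)$ guaranteeing both that $\varphi(i)\notin V_1$ (so the single-index inequality applies to the partners) and that the four pieces of $T$ are pairwise disjoint. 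All the set-theoretic bookkeeping checks out: $\varphi(B)\cap S=\emptyset$ by the definition of $B$, $\varphi(A)\subseteq S\setminus V_1$ by $(1)$, and injectivity of $\varphi$ keeps the cardinalities right. What your approach buys is a fully explicit, self-contained verification (the paper's base case and the preservation of the hypotheses by $\alpha'$ are left to the reader), and it dovetails naturally with the ``choose any $r$-subset $U$'' formulation of the main theorems, since it literally produces the competing subset $T$. What the paper's induction buys is brevity and the reusable observation that the hypotheses degrade gracefully when one pair is ``promoted'' to $\beta$-values. Either argument suffices; yours is the more airtight as written.
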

\begin{proof}
Induction on $|V_1|$ which is the size of $V_1$, let $k=|V_1|$. If $k=1$, then the assertion holds by considering that the sum of the first $l$ largest elements of $\alpha$ contains $a_{\varphi(i)},~i\in V_1$ or does not contain. Next suppose the assertion holds for $k>2$, we will prove that the assertion holds for $k+1$. Let $i_0\in V_1$, and $\alpha'=(a'_1,...,a'_n)^T$ where $a'_i=a_i$ for $i\neq i_0,\varphi(i_0)$ and $a'_i=b_i$ for $i=i_0,\varphi(i_0)$. By induction,
$$\alpha\lhd_wa'\lhd_w\beta.$$
This completes the proof.
\end{proof}
Denoted by $\mathcal{C}^e_u(k;G)$ be the set of the closed walks of length $k$ in $G$ starting at $u$ and going through $e$. The cardinality of $\mathcal{C}^e_u(k;G)$ is denoted by $C^e_u(k;G)$.
\begin{theorem}\label{VertexM}
Let $D$ be a leveled degree sequence of rooted tree and $e=xx_1\in E(T_{\pi}^*)$, $B$ is a branch of the level greedy tree $G=T_{\pi}^*$ by deleting the edge $e$, which does not contain the root. Let $T=G-xx_1+x'x_1$ where $x,~x'$ are in the same level(see $Fig.1$), then $C(2k;G)\lhd_w C(2k;T)$.
\end{theorem}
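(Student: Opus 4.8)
The plan is to derive the weak majorization from Lemma~\ref{Majorizationlemma}, with $\alpha=C(2k;G)$, $\beta=C(2k;T)$, and a carefully chosen pairing of the vertices at which the closed‑walk count may drop. Write $e=xx_1$, where $x_1$ is the root of the moved branch $B$ and $x$ its parent in $G$, and $e'=x'x_1$; put $G_0=G-V(B)$, which contains the root and both $x,x'$. Since $x,x'$ lie in the same level of the greedy tree $G$, we may assume (this is the situation of Fig.~1) that $x'$ precedes $x$ in the BFS‑ordering, so that $C_{x'}(2j;G)\ge C_{x}(2j;G)$ by Lemma~\ref{lemma1.4} and, more strongly, the subtree of $G$ rooted at $x'$ dominates the subtree rooted at $x$ in the sense used in Lemmas~\ref{1.5}--\ref{Edgeroot}; deleting $B$ only strengthens this on $G_0$, so $C_{x}(2j;G_0)\le C_{x'}(2j;G_0)$ for all $j$. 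The starting identity is $T-e'=G-xx_1=G-e$ as graphs, whence $C_u(2k;G-e)=C_u(2k;T-e')$ and therefore
$$C_u(2k;T)-C_u(2k;G)=C_u^{e'}(2k;T)-C_u^{e}(2k;G)\qquad(u\in V(G)).$$

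Next I would set up the excursion decomposition for walks through $e$ and through $e'$: a closed walk through $e$ from $u$ splits into a backbone closed walk inside the piece of $G-e$ containing $u$, with excursions across $e$ spliced in at the visits to the attachment vertex; each excursion into $B$ is a closed walk from $x_1$ in $B$ flanked by two traversals of $e$, and each excursion out of $B$ is a closed walk from $x$ in $G_0$ flanked likewise. Two consequences. First, for $u\in V(B)$, $C_u^{e}(2k;G)$ is a nondecreasing function of $\bigl(C_{x}(2j;G_0)\bigr)_{j\ge 0}$, and $C_u^{e'}(2k;T)$ is that very function of $\bigl(C_{x'}(2j;G_0)\bigr)_{j\ge 0}$; combined with $C_x(2j;G_0)\le C_{x'}(2j;G_0)$ this gives $C_u(2k;G)\le C_u(2k;T)$ for every $u\in V(B)$. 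Second, for $u\in V(G_0)$, $C_u^{e}(2k;G)$ is a nondecreasing function of the closed walks from $u$ in $G_0$ refined by their number of visits to $x$, convolved with the into‑$B$ excursion series of $B$ at $x_1$ (the same for $G$ and $T$), and $C_u^{e'}(2k;T)$ is the analogue with $x$ replaced by $x'$; hence for $u\in V(G_0)$ \emph{not} in the subtree of $G$ rooted at $x$, the domination shows that walks from $u$ meet $x'$ at least as often, with the required multiplicities, as they meet $x$, so $C_u(2k;G)\le C_u(2k;T)$ again.

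Finally I would apply Lemma~\ref{Majorizationlemma} with $V_1$ the vertex set of the subtree of $G$ rooted at $x$ after deleting $V(B)$ — exactly the vertices at which the count can drop. Choose a level‑preserving injection $\iota$ of this subtree into the subtree of $G$ rooted at $x'$ that respects the closed‑walk domination vertex by vertex (possible since the $x'$‑subtree dominates, hence is at least as large), set $\varphi=\iota$ on $V_1$ (so $\varphi(x)=x'$), and extend $\varphi$ to a bijection of $V(G)$. Then $\varphi(V_1)\cap V_1=\emptyset$; for $u\notin V_1$ the previous paragraph gives $C_u(2k;G)\le C_u(2k;T)$; for $u\in V_1$ the domination gives the ordering condition $C_u(2k;G)\le C_{\varphi(u)}(2k;G)$; and the pair condition, via the starting identity, is $C_u^{e}(2k;G)+C_{\iota(u)}^{e}(2k;G)\le C_u^{e'}(2k;T)+C_{\iota(u)}^{e'}(2k;T)$, which follows from the excursion decomposition by exchanging the roles of the two ends $x,x'$ of the $x$--$x'$ path relative to $u$ and $\iota(u)$ and using that $\iota(u)$ dominates $u$ together with the common branch series. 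Lemma~\ref{Majorizationlemma} then yields $C(2k;G)\lhd_w C(2k;T)$, and since trees have no odd closed walks the theorem follows. The crux — and the step I expect to be the real obstacle — is this last pair inequality: it demands tracking the visit‑multiplicities to the attachment vertex in the excursion decomposition and matching them across $\iota$, since for an individual $u\in V_1$ neither $C_u^{e}(2k;G)\le C_u^{e'}(2k;T)$ nor its reverse can be expected.
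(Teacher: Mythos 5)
Your plan follows the same skeleton as the paper's proof (reduce to Lemma~\ref{Majorizationlemma}, pair the vertices that may lose closed walks with vertices on the $x'$-side, and exploit the identity $C_u(2k;T)-C_u(2k;G)=C_u^{e'}(2k;T)-C_u^{e}(2k;G)$, valid since $T-e'=G-e$), but it has a concrete error in the choice of $V_1$. You take $V_1$ to be only the subtree of $G$ rooted at $x$ (minus $V(B)$) and assert that every $u\in V(G_0)$ outside that subtree satisfies $C_u(2k;G)\le C_u(2k;T)$. This is false. Let $w$ be the common ancestor of $x$ and $x'$ (the midpoint of the $x$--$x'$ path, since both lie in the same level) and let $y$ be the neighbour of $w$ towards $x$. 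Any vertex $u$ in the component of $G-w$ containing $y$ but outside the subtree rooted at $x$ --- for instance $y$ itself, or a vertex of one of the branches $C_1,\dots,C_i$ hanging along the path from $y$ to $x$ in Fig.~1 --- satisfies $d(u,x')=d(u,x)+2d(z,w)>d(u,x)$, where $z$ is where the $u$--$w$ and $x$--$w$ paths meet. Taking $2k=2(d(u,x)+1)$ gives $C_u^{e}(2k;G)\ge 1$ while $C_u^{e'}(2k;T)=0$, hence $C_u(2k;T)<C_u(2k;G)$. So these vertices must also be placed in $V_1$ and paired. The correct $V_1$ is the whole $x$-side component of $G-w$, and the pairing is a level-preserving isomorphism $h$ from that component into the $x'$-side component with $h(x)=x'$, $h(e)=e'$; its existence is exactly where the hypothesis that $G$ is a level greedy tree enters. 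This is how the paper sets things up, and it is not recoverable from your smaller $V_1$.

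The second gap is the one you flag yourself: the pair inequality $C_u^{e}(2k;G)+C_{\varphi(u)}^{e}(2k;G)\le C_u^{e'}(2k;T)+C_{\varphi(u)}^{e'}(2k;T)$ is asserted, not proved, and your generating-function/monotonicity framing cannot deliver it, precisely because (as you note) neither single-vertex inequality holds for $u\in V_1$. The paper closes this by an explicit combinatorial injection $F_2:\mathcal{C}_u^{e}(k;G)\cup\mathcal{C}_v^{e}(k;G)\to\mathcal{C}_u^{e'}(k;T)\cup\mathcal{C}_v^{e'}(k;T)$ (with $v=w'$, $u=h(w')$): each closed walk is cut at its first and last visits to $w$, the middle segment is rerouted via $h$, and the two outer segments are kept or pushed through $h$ according to whether they traverse $e$; injectivity follows from the uniqueness of this decomposition together with the injectivity of $h$. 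A companion injection handles the vertices outside $V_1$ in one stroke (covering $V(B)$ and the $x'$-side simultaneously), rather than the separate convolution arguments you sketch. In short: the strategy is right, but both the pairing set and the crucial two-term inequality need the level-greedy isomorphism and an explicit walk surgery, neither of which your proposal supplies.
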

$$
\begin{tikzpicture}[scale=0.75]
\coordinate[label=$w$] (I) at (5cm,6cm);
\coordinate[label=$y'$] (I) at (4cm,5cm);
\coordinate[label=$y$] (I) at (6cm,5cm);
\coordinate[label=$x$] (I) at (8cm,4cm);
\coordinate[label=$x_1$] (I) at (9.4cm,3.2cm);
\coordinate[label=$x'$] (I) at (2cm,4cm);
\coordinate[label=$e$] (I) at (8.5cm,3.6cm);
\coordinate[label=$C_0$] (I) at (5cm,2.5cm);
\coordinate[label=$C_1$] (I) at (6cm,2.5cm);
\coordinate[label=$C_i$] (I) at (8cm,2.5cm);
\coordinate[label=$B$] (I) at (9cm,2.45cm);
\coordinate[label=$C_1'$] (I) at (4cm,2.45cm);
\coordinate[label=$C_i'$] (I) at (2cm,2.45cm);
\coordinate[label=$\cdots$] (I) at (3cm,2.4cm);
\coordinate[label=$\cdots$] (I) at (7cm,2.4cm);
\coordinate[label=$G$] (I) at (5cm,1.25cm);
\fill[black]
(5cm,6cm) circle(0.6mm)
(4cm,5cm) circle(0.6mm)
(6cm,5cm) circle(0.6mm)
(2cm,4cm) circle(0.6mm)
(8cm,4cm) circle(0.6mm)
(9cm,3.5cm) circle(0.6mm);
\draw[-]
(5cm,6cm)--(7cm,8cm)
(5cm,6cm)--(4cm,5cm)
(5cm,6cm)--(6cm,5cm)
(8cm,4cm)--(9cm,3.5cm);
\draw[dashed]
(2cm,4cm)--(4cm,5cm)
(8cm,4cm)--(6cm,5cm)
(5cm,6cm)--++(0.45cm,-3.5cm)--++(-0.9cm,0cm)--cycle
(4cm,5cm)--++(0.45cm,-2.5cm)--++(-0.9cm,0cm)--cycle
(6cm,5cm)--++(0.45cm,-2.5cm)--++(-0.9cm,0cm)--cycle
(2cm,4cm)--++(0.45cm,-1.5cm)--++(-0.9cm,0cm)--cycle
(8cm,4cm)--++(0.45cm,-1.5cm)--++(-0.9cm,0cm)--cycle
(9cm,3.5cm)--++(0.45cm,-1cm)--++(-0.9cm,0cm)--cycle
(1.3cm,7cm)--(9.7cm,7cm)--(9.7cm,1cm)--(1.3cm,1cm)--(1.3cm,7cm);
\end{tikzpicture}
\hskip 1cm
\begin{tikzpicture}[scale=0.75]
\coordinate[label=$w$] (I) at (8cm,6cm);
\coordinate[label=$y'$] (I) at (7cm,5cm);
\coordinate[label=$y$] (I) at (9cm,5cm);
\coordinate[label=$x$] (I) at (11cm,4cm);
\coordinate[label=$x_1$] (I) at (3.6cm,3.2cm);
\coordinate[label=$x'$] (I) at (5cm,4cm);
\coordinate[label=$e'$] (I) at (4.3cm,3.6cm);
\coordinate[label=$C_0$] (I) at (8cm,2.5cm);
\coordinate[label=$C_1$] (I) at (9cm,2.5cm);
\coordinate[label=$C_i$] (I) at (11cm,2.5cm);
\coordinate[label=$B$] (I) at (4cm,2.45cm);
\coordinate[label=$C_1'$] (I) at (7cm,2.45cm);
\coordinate[label=$C_i'$] (I) at (5cm,2.45cm);
\coordinate[label=$\cdots$] (I) at (6cm,2.4cm);
\coordinate[label=$\cdots$] (I) at (10cm,2.4cm);
\coordinate[label=$T$] (I) at (8cm,1.25cm);
\fill[black]
(8cm,6cm) circle(0.6mm)
(7cm,5cm) circle(0.6mm)
(9cm,5cm) circle(0.6mm)
(5cm,4cm) circle(0.6mm)
(11cm,4cm) circle(0.6mm)
(4cm,3.5cm) circle(0.6mm);
\draw[-]
(8cm,6cm)--(10cm,8cm)
(8cm,6cm)--(7cm,5cm)
(8cm,6cm)--(9cm,5cm)
(5cm,4cm)--(4cm,3.5cm);
\draw[dashed]
(5cm,4cm)--(7cm,5cm)
(11cm,4cm)--(9cm,5cm)
(8cm,6cm)--++(0.45cm,-3.5cm)--++(-0.9cm,0cm)--cycle
(7cm,5cm)--++(0.45cm,-2.5cm)--++(-0.9cm,0cm)--cycle
(9cm,5cm)--++(0.45cm,-2.5cm)--++(-0.9cm,0cm)--cycle
(5cm,4cm)--++(0.45cm,-1.5cm)--++(-0.9cm,0cm)--cycle
(11cm,4cm)--++(0.45cm,-1.5cm)--++(-0.9cm,0cm)--cycle
(4cm,3.5cm)--++(0.45cm,-1cm)--++(-0.9cm,0cm)--cycle
(3.3cm,7cm)--(11.7cm,7cm)--(11.7cm,1cm)--(3.3cm,1cm)--(3.3cm,7cm);
\end{tikzpicture}$$
$$Fig.1$$

\begin{proof}
Let $w$ be the common ancestor of $x$ and $x'$ in $G$, then we can find two vertices $y$ and $y'$ which are adjacent with $w$. And $G-w$(respectively, $T-w$) has two components $G_1,G_2$(respectively, $T_1,T_2$), which contain $y,y'$, respectively. Since $G$ is level greedy tree, there exists a isomorphism $h$ from $T-G_1$ to $T-T_2$ such that $h(x)=x', h(e)=e',h(w)=w$ and keeps the level.

Define
$$F:\mathcal{C}_w(k,G)\longrightarrow \mathcal{C}_w(k,T).$$
Let $W=w_1w_2\cdots w_{k+1}\in \mathcal{C}_w(k,G)$ and $m,M$ be the minimal and maximal index such that $w_m=w_M=w, 1<m\le M<k+1$, if there exist such integers. Then
\begin{itemize} \setlength{\itemsep}{-\itemsep}
\item If $w\notin \{w_2,\cdots ,w_k\}$ and $w_sw_{s+1}\neq e,s=2,3,\cdots ,k$, then $H(W)=W$.
\item If $w\notin \{w_2,\cdots ,w_k\}$ and $w_sw_{s+1}= e,$ for some $s\in\{2,3,\cdots ,k\}$, then \\
    $H(W)=h(w_1)h(w_2)h(w_3)\cdots h(w_k)h(w_{k+1})$.
\item Otherwise, then $H(W)=\phi(w_1\cdots w_{k-1})H(w_m\cdots w_M)\phi(w_{M+1}\cdots w_{k+1})$, \\where $\phi(w_1\cdots w_{k-1})=h(w_1)h(w_2)\cdots h(w_{k-1})$ and $w_sw_{s+1}= e,$ for some \\$s\in \{1,2,..,s-2\}$ and $\phi(w_1\cdots w_{k-1})=w_1w_2\cdots w_{k-1}$ otherwise.
\end{itemize}
That is, break a walk into pieces divided by visiting the vertex $w$, each pieces is either kept the same or replaced by its image under the injective $h$ depending on whether it contains $e$. By the uniqueness of the decomposition of the walks and $h$ is injective, then $H$ is also a injective map. By Lemma~\ref{Majorizationlemma}, It is sufficient to prove the following two cases:\\
{\bf Case 1}: If $w'\notin V(T)-V(T_2)$, then $C_{w'}(k;T)\ge C_{w'}(k;G)$.

It is sufficient if there exists an injective map form $\mathcal{C}_{w'}(k;G)$ to $\mathcal{C}_{w'}(k;T)$. Suppose $W=w_1w_2\cdots w_{k+1}\in \mathcal{C}_{w'}(k;G)$, and $m,M$ defined as before. Define
$$F_1(W)=\phi(w_1\cdots w_{m-1})H(w_m\cdots w_M)\phi(w_{M+1}\cdots w_{k+1}).$$
Next we will verify $F_1$ is injective.\\
Suppose $F_1(W_1)=F_1(W_2), W_1,W_2\in \mathcal{C}_{w'}(k;G)$, then the positions of $w$ in $W_1$ and $W_2$ are same. Let $W_i=w_1^i\cdots w^i_{k+1},i=1,2,$ then
$\phi(w_1^1\cdots w_{m-1}^1)=\phi(w_1^2\cdots w_{m-1}^2)$, $H(w_m^1\cdots w_M^1) =H(w_m^2\cdots w_M^2),~\phi(w_{M+1}^1\cdots w_{k+1}^1)=\phi(w_{M+1}^2\cdots w_{k+1}^2).$ This implies that $W_1=W_2$.\\
{\bf Case 2}: If $w'\in V(T_1)$, then $C_{w'}(k;T)+C_{h(w')}(k;T)\ge C_{w'}(k;G)+ C_{h(w')}(k;G)$.\\
For simplicity, let $u=h(w'),v=w'$, since $T-B=G-B$ and
\begin{eqnarray*}
&&C_u(k;T)=C_u^{e'}(k;T)+(C_u(k;T)-C_u^{e'}(k;T)),\\
&&C_u(k;G)=C_u^{e}(k;G)+(C_u(k;G)-C_u^{e}(k;G)),\\
&&C_v(k;T)=C_v^{e'}(k;T)+(C_v(k;T)-C_v^{e'}(k;T)),\\
&&C_v(k;G)=C_v^{e}(k;G)+(C_v(k;G)-C_v^{e}(k;G)),
\end{eqnarray*}
then
\begin{eqnarray*}
&&C_u(k;T)-C_u(k;G)=C_u^{e'}(k;T)-C_u^{e}(k;G),\\
&&C_v(k;T)-C_v(k;G)=C_v^{e'}(k;T)-C_v^{e}(k;G).
\end{eqnarray*}
Thus $C_{u}(k;T)+C_{v}(k;T)\ge C_{u}(k;G)+C_{v}(k;G)$ holds if and only if $C_u^{e'}(k;T)+C_v^{e'}(k;T)\ge C_u^{e}(k;G)+C_v^{e}(k;G)$.
So it is sufficient if there exists an injective map from $\mathcal{C}_u^{e}(k;G)\cup\mathcal{C}_v^{e}(k;G)$ to $\mathcal{C}_u^{e'}(k;T)\cup \mathcal{C}_v^{e'}(k;T)$. Define the following injective map:
$$F_2:\mathcal{C}_u^{e}(k;G)\cup\mathcal{C}_v^{e}(k;G)\longrightarrow \mathcal{C}_u^{e'}(k;T)\cup \mathcal{C}_v^{e'}(k;T).$$
Let $W=\widetilde{w}W_1wW_2wW_3\widetilde{w},w\notin V(W_1),w\notin V(W_3)$, suppose that the following closed walk has the same form. \\
{\bf Subcase 2.1}: If $W=uW_1wW_2wW_3u\in \mathcal{C}_u^{e}(k;G)$, define $F_2(W)=uW_1H(wW_2w)W_3u$.\\
{\bf Subcase 2.2}: If $W=vW_1wW_2wW_3v\in \mathcal{C}_v^{e}(k;G)$, divide it into the followings:
\begin{itemize} \setlength{\itemsep}{-\itemsep}
\item If $e\notin E(W_1)\cup E(W_3)$, then $F_2(W)=vW_1H(wW_2w)W_3v$.
\item If $e\in E(W_1),e\notin E(W_3)$, then $F_2(W)=uh(W_1)H(wW_{2}w)h(W_3)u$.
\item If $e\notin E(W_1),e\in E(W_3)$, then $F_2(W)=uh(W_1)H(wW_{2}w)h(W_3)u$.
\item If $e\in E(W_1),e\in E(W_3)$, then $F_2(W)=uh(W_1)H(wW_{2}w)h(W_3)u$.
\end{itemize}
Next we verify that $F_2$ is injective. If $W,\widetilde{W}$ in the same case, then $F_2(W)=F_2(\widetilde{W})$ implies that $W=\widetilde{W}$, since $H$ is injective. If $W\in \mathcal{C}_u^{e}(k;G)$, $\widetilde{W}\in \mathcal{C}_v^{e}(k;G)$. Then $F_2(W)\neq F_2(\widetilde{W})$, since they do not have the same initial vertex or $W_1\neq h(\widetilde{W_1})$ or $W_3\neq h(\widetilde{W_3})$ by $e'\notin E(W_1) \cup E(W_3)$ and $e'\in E(h(\widetilde{W_1}))\cup E(h(\widetilde{W_3}))$.
\end{proof}
\begin{theorem}\label{EdgeM}
Let $D$ be a leveled degree sequence of edge rooted tree and $e=xx_1\in E(T_{\pi}^*)$, $B$ is a branch of the level greedy tree $G=T_{\pi}^*$ by deleting the edge $e$, which does not contain the root. Let $T=G-xx_1+x'x_1$ where $x,~x'$ are in the same level(see $Fig.2$), then $C(k;G)\lhd_w C(k;T)$.
\end{theorem}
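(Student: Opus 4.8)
The plan is to rerun the proof of Theorem~\ref{VertexM} almost verbatim, the single new feature being that in an edge-rooted tree the common ancestor of $x$ and $x'$ in $G$ may be the root edge rather than a vertex. Since $G$ and $T=G-xx_1+x'x_1$ share the same vertex set, I would apply Lemma~\ref{Majorizationlemma} with $\alpha=C(k;G)$, $\beta=C(k;T)$, with $V_1$ the vertex set of the component of $T$ minus the hub that contains $x$ --- this is the side whose closed-walk counts can drop when $B$ is relocated --- and with $\varphi=h|_{V_1}$, where $h$ is the level-preserving isomorphism matching the $x$-side of the hub onto the $x'$-side with $h(x)=x'$, $h(e)=e'$ and $h$ fixing the hub, supplied by the level greedy structure of $G$ exactly as in Theorem~\ref{VertexM}. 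As $\varphi(V_1)\cap V_1=\emptyset$, it suffices to verify the three hypotheses of Lemma~\ref{Majorizationlemma}: $C_v(k;G)\le C_v(k;T)$ for $v\notin V_1$ (in particular for the vertices of the migrated branch $B$ and of the $x'$-side), $C_v(k;G)\le C_{h(v)}(k;G)$ for $v\in V_1$, and $C_v(k;G)+C_{h(v)}(k;G)\le C_v(k;T)+C_{h(v)}(k;T)$ for $v\in V_1$. The first and third are established by adapting the injections $H,F_1,F_2$ of Theorem~\ref{VertexM}, each of which cuts a closed walk at its successive visits to the hub and reroutes, via $h$, those pieces that interact with the edge $e$.

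The only genuinely new point is the identity of the hub. If $x$ and $x'$ have a common ancestor \emph{vertex} $w$ in $G$, then no edge incident with the root is ever involved, closed-walk counts do not distinguish between a vertex root and an edge root, and the maps $H,F_1,F_2$ transfer word for word from Theorem~\ref{VertexM} --- nothing is added. The new case is that $x$ lies in the component of $G-r_1r_2$ containing $r_1$ and $x'$ in the one containing $r_2$. Here I would take the hub to be $\{r_1,r_2\}$ together with the root edge $r_1r_2$, with $h$ now swapping the $r_1$-side onto the $r_2$-side, $h(r_1)=r_2$, $h(x)=x'$, $h(e)=e'$ (again provided by the level greedy structure, since $G$ is an edge-rooted level greedy tree). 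Each closed walk is cut at its successive visits to $\{r_1,r_2\}$; a maximal piece between two consecutive hub-visits is either the root edge itself, or an excursion into one side, and such a piece is kept as is unless it meets $e$, in which case it is relocated by $h$ to the other side of $r_1r_2$. The subsidiary case analysis --- which of $r_1,r_2$ a piece is anchored at, hence which vertex the reassembled walk starts from --- runs just as in Subcases~2.1--2.2 of Theorem~\ref{VertexM}, the one vertex $w$ being replaced by the two-point hub; Lemmas~\ref{Root1} and~\ref{Root2} supply the comparisons for the pieces that stay near the root edge.

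The leftover checks are routine. The inequality $C_v(k;G)\le C_{h(v)}(k;G)$ for $v\in V_1$ follows from Lemmas~\ref{lemma1.4} and~\ref{Edgeroot} applied at level $h(v)$, since in the level greedy tree the $r_2$-side vertices dominate the matching $r_1$-side vertices in the closed-walk order. Injectivity of the rerouting maps follows, as in Theorem~\ref{VertexM}, from the uniqueness of the hub-delimited decomposition of a walk together with the injectivity of $h$. The step I expect to be the main obstacle is exactly this injectivity bookkeeping in the edge-hub case: a closed walk may cross the root edge $r_1r_2$ many times and interleave those crossings arbitrarily with its visits to $e$, so one has to check that a piece's anchoring root-vertex together with its ``meets $e$ or not'' status pins the piece down, and that two distinct source walks never collide after relocation. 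Finally, all odd-length closed walks in a tree vanish, so the statement $C(k;G)\lhd_w C(k;T)$ is the same as $C(2k;G)\lhd_w C(2k;T)$, and the work may be carried out for even lengths only, just as in the earlier lemmas; this yields Theorem~\ref{EdgeM}.
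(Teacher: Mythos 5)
Your proposal is correct and follows essentially the same route as the paper: the paper also treats the root edge $yy'$ as the hub, uses a level-preserving isomorphism $h$ from the $x$-side of $G$ onto the $x'$-side of $T$ with $h(y)=y'$ and $h(e)=e'$, decomposes closed walks at traversals of the root edge, reroutes the pieces containing $e$ via $h$, and then feeds the resulting injections (the analogues of $H$, $F_1$, $F_2$, with the same Case 1/Case 2 split) into Lemma~\ref{Majorizationlemma}. The bookkeeping you flag as the main obstacle is exactly what the paper's definition of $H$ handles, by re-anchoring a relocated excursion with an extra back-and-forth over the root edge so that lengths and endpoints are preserved.
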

$$
\begin{tikzpicture}[scale=0.75]
\coordinate[label=$y'$] (I) at (4cm,5cm);
\coordinate[label=$y$] (I) at (6cm,5cm);
\coordinate[label=$x$] (I) at (8cm,4cm);
\coordinate[label=$x_1$] (I) at (9.4cm,3.2cm);
\coordinate[label=$x'$] (I) at (2cm,4cm);
\coordinate[label=$e$] (I) at (8.5cm,3.6cm);
\coordinate[label=$C_1$] (I) at (6cm,2.5cm);
\coordinate[label=$C_i$] (I) at (8cm,2.5cm);
\coordinate[label=$B$] (I) at (9cm,2.45cm);
\coordinate[label=$C_1'$] (I) at (4cm,2.45cm);
\coordinate[label=$C_i'$] (I) at (2cm,2.45cm);
\coordinate[label=$\cdots$] (I) at (3cm,2.4cm);
\coordinate[label=$\cdots$] (I) at (7cm,2.4cm);
\coordinate[label=$G$] (I) at (5cm,1.25cm);
\fill[black]
(4cm,5cm) circle(0.6mm)
(6cm,5cm) circle(0.6mm)
(2cm,4cm) circle(0.6mm)
(8cm,4cm) circle(0.6mm)
(9cm,3.5cm) circle(0.6mm);
\draw[-]
(4cm,5cm)--(6cm,5cm)
(8cm,4cm)--(9cm,3.5cm);
\draw[dashed]
(2cm,4cm)--(4cm,5cm)
(8cm,4cm)--(6cm,5cm)
(4cm,5cm)--++(0.45cm,-2.5cm)--++(-0.9cm,0cm)--cycle
(6cm,5cm)--++(0.45cm,-2.5cm)--++(-0.9cm,0cm)--cycle
(2cm,4cm)--++(0.45cm,-1.5cm)--++(-0.9cm,0cm)--cycle
(8cm,4cm)--++(0.45cm,-1.5cm)--++(-0.9cm,0cm)--cycle
(9cm,3.5cm)--++(0.45cm,-1cm)--++(-0.9cm,0cm)--cycle
(1.3cm,7cm)--(9.7cm,7cm)--(9.7cm,1cm)--(1.3cm,1cm)--(1.3cm,7cm);
\end{tikzpicture}
\hskip 1cm
\begin{tikzpicture}[scale=0.75]
\coordinate[label=$y'$] (I) at (7cm,5cm);
\coordinate[label=$y$] (I) at (9cm,5cm);
\coordinate[label=$x$] (I) at (11cm,4cm);
\coordinate[label=$x_1$] (I) at (3.6cm,3.2cm);
\coordinate[label=$x'$] (I) at (5cm,4cm);
\coordinate[label=$e'$] (I) at (4.3cm,3.6cm);
\coordinate[label=$C_1$] (I) at (9cm,2.5cm);
\coordinate[label=$C_i$] (I) at (11cm,2.5cm);
\coordinate[label=$B$] (I) at (4cm,2.45cm);
\coordinate[label=$C_1'$] (I) at (7cm,2.45cm);
\coordinate[label=$C_i'$] (I) at (5cm,2.45cm);
\coordinate[label=$\cdots$] (I) at (6cm,2.4cm);
\coordinate[label=$\cdots$] (I) at (10cm,2.4cm);
\coordinate[label=$T$] (I) at (8cm,1.25cm);
\fill[black]
(7cm,5cm) circle(0.6mm)
(9cm,5cm) circle(0.6mm)
(5cm,4cm) circle(0.6mm)
(11cm,4cm) circle(0.6mm)
(4cm,3.5cm) circle(0.6mm);
\draw[-]
(7cm,5cm)--(9cm,5cm)
(5cm,4cm)--(4cm,3.5cm);
\draw[dashed]
(5cm,4cm)--(7cm,5cm)
(11cm,4cm)--(9cm,5cm)
(7cm,5cm)--++(0.45cm,-2.5cm)--++(-0.9cm,0cm)--cycle
(9cm,5cm)--++(0.45cm,-2.5cm)--++(-0.9cm,0cm)--cycle
(5cm,4cm)--++(0.45cm,-1.5cm)--++(-0.9cm,0cm)--cycle
(11cm,4cm)--++(0.45cm,-1.5cm)--++(-0.9cm,0cm)--cycle
(4cm,3.5cm)--++(0.45cm,-1cm)--++(-0.9cm,0cm)--cycle
(3.3cm,7cm)--(11.7cm,7cm)--(11.7cm,1cm)--(3.3cm,1cm)--(3.3cm,7cm);
\end{tikzpicture}$$
$$Fig.2$$
\begin{proof}
Let $G_1,G_2$(respectively, $T_1,T_2$) be the two components of $G-yy'$(respectively, $T-yy'$), which contain $x,x'$, respectively. Then we define a isomorphism $h$ from $G_1$ to $T_2$, such that $y'=h(y),e'=h(e)$ and keeps the level. Then we define the following injective map:
$$F: \mathcal{C}_{r(G)}(k;G)\longrightarrow \mathcal{C}_{r(T)}(k;T)$$
Let $W=w_1w_2W_1w_2w_1W_2\cdots w_1w_2W_{2k+1}w_2w_1W_{2k+2}w_1$, where $w_1w_2=yy'$ or $y'y$, $w_1w_2,w_2w_1\notin \cup_{i=1}^{2k+2}E(W_i)$ and $w_1\notin \cup_{i=0}^kW_{2i+1}$. If $W_i$ is a empty set, then denote $wW_iw=w$. Define
\begin{itemize} \setlength{\itemsep}{\itemsep}
\item If $w_1w_2=y'y$, then $H(W)=\phi(y'yW_1yy'W_2)\cdots\phi(y'yW_{2k+1}yy'W_{2k+2})y'$.
Where \\$\phi(y'yW_1yy'W_2)=y'yW_1yy'W_2$ if $e\notin W_1$, $\phi(y'yW_1yy'W_2)=y'yy'h(W_1)y'W_2$ otherwise.

\item If $w_1w_2=yy'$, then $H(W)=yy'W_1H(y'yW_2yy'\cdots  y'yW_{2k}yy') \phi(W_{2k+1}y'yW_{2k+2})\\y$,
where $\phi(W_{2k+1}y'yW_{2k+2})y=W_{2k+1}y'yW_{2k+2}y$ if $e\notin W_{2k+2}$, \\$\phi(W_{2k+1}y'yW_{2k+2})y=W_{2k+1}y'h(W_{2k+2})y'y$ otherwise.
\end{itemize}
In words, break a walk into pieces divided by edges $yy',y'y$, each piece is kept the same or replaced by its image under the injective map $h$ if it contains $e$. Since the decomposition of the walks is unique and $h$ is injective, so $H$ is also injective. By Lemma~\ref{Majorizationlemma}, it is sufficient to prove the following. \\
{\bf Case 1}: If $w'\notin V(T)-V(T_1)$, then $C_{w'}(k;T)\ge C_{w'}(k;G)$.

If $w'\notin V(T)-V(G_1)$, then it is sufficient if there exists an injective map $F_1$ from $\mathcal{C}_{w'}(k;T)$ to $\mathcal{ C}_{w'}(k;G)$. Let $W=w_1w_2\cdots w_{k+1}$ and $m'$(respectively, $M'$) be the smallest (respectively, largest) integer such that $w_{m'}w_{m'+1}=y'y$(respectively, $w_{M'-1}w_{M'}=yy'$). Define $$F_1(W)=\phi(w_1w_2\cdots w_{m-1})H(w_mw_{m+1}\cdots w_M)\phi(w_{M+1}\cdots w_{k+1}).$$
Since $H$ is injective, then $F_1$ is also injective.

If $w'\notin V(B)$. Let $m'$(respectively, $M'$) be the smallest (respectively, largest) integer such that $w_{m'}w_{m'+1}=yy'$(respectively, $w_{M'-1}w_{M'}=y'y$). Define $$F_1(W)=\phi(w_1w_2\cdots w_{m'-1})H(w_{m'}w_{m+1}\cdots w_{M'})\phi(w_{M'+1}\cdots w_{k+1}).$$
Since $H$ is injective, then $F_1$ is also injective.\\
{\bf Case 2}: If $w'\in V(T_1)$, then $C_{w'}(k;T)+C_{h(w')}(k;T)\ge C_{w'}(k;G)+ C_{h(w')}(k;G)$.\\
For simplicity, let $u=h(w'),v=w'$, since $T-B=G-B$ and
\begin{eqnarray*}
&&C_u(k;T)=C_u^{e'}(k;T)+(C_u(k;T)-C_u^{e'}(k;T)),\\
&&C_u(k;G)=C_u^{e}(k;G)+(C_u(k;G)-C_u^{e}(k;G)),\\
&&C_v(k;T)=C_v^{e'}(k;T)+(C_v(k;T)-C_v^{e'}(k;T)),\\
&&C_v(k;G)=C_v^{e}(k;G)+(C_v(k;G)-C_v^{e}(k;G)),
\end{eqnarray*}
then
\begin{eqnarray*}
&&C_u(k;T)-C_u(k;G)=C_u^{e'}(k;T)-C_u^{e}(k;G),\\
&&C_v(k;T)-C_v(k;G)=C_v^{e'}(k;T)-C_v^{e}(k;G).
\end{eqnarray*}
Thus $C_{u}(k;T)+C_{v}(k;T)\ge C_{u}(k;G)+C_{v}(k;G)$ holds if and only if $C_u^{e'}(k;T)+C_v^{e'}(k;T)\ge C_u^{e}(k;G)+C_v^{e}(k;G)$.
So it is sufficient if there exists an injective map from $\mathcal{C}_u^{e}(k;G)\cup\mathcal{C}_v^{e}(k;G)$ to $\mathcal{C}_u^{e'}(k;T)\cup \mathcal{C}_v^{e'}(k;T)$. Define the following injective map:
$$F_2:\mathcal{C}_u^{e}(k;G)\cup\mathcal{C}_v^{e}(k;G)\longrightarrow \mathcal{C}_u^{e'}(k;T)\cup \mathcal{C}_v^{e'}(k;T).$$
Let $W=wW_1y'yW_2yy'W_3w,y'y\notin E(W_1),yy'\notin E(W_3)$, suppose that the following closed walk has the same form. \\
{\bf Subcase 2.1}: If $W=uW_1y'yW_2yy'W_3u\in \mathcal{C}_u^{e}(k;G)$, then define $F_2(W)=uW_1H(y'yW_2yy')W_3u$.\\
{\bf Subcase 2.2}: If $W=vW_1y'yW_2yy'W_3v\in \mathcal{C}_v^{e}(k;G)$, then divide it into the followings:
\begin{itemize} \setlength{\itemsep}{-\itemsep}
\item If $e\notin E(W_1)\cup E(W_3)$, then $F_2(W)=vW_1H(y'yW_2yy')W_3v$.
\item If $e\in E(W_1),e\notin E(W_3)$, then $F_2(W)=uh(W_1)y'W_{21}H(y'yW_{22}yy')h(W_3)u$.
\item If $e\notin E(W_1),e\in E(W_3)$, then $F_2(W)=uh(W_1)y'W_{21}H(y'yW_{22}yy')h(W_3)u$.
\item If $e\in E(W_1),e\in E(W_3)$, then $F_2(W)=uh(W_1)y'W_{21}H(y'yW_{22}yy')h(W_3)u$.
\end{itemize}
where $y'W_2=y'W_{21}y'yW_{22},y'y\notin W_{21}$. Next we verify that $F_2$ is injective. If $W,\widetilde{W}$ in the same case, then $F_2(W)=F_2(\widetilde{W})$ implies that $W=\widetilde{W}$, since $H$ is injective. If $W\in \mathcal{C}_u^{e}(k;G)$, $\widetilde{W}\in \mathcal{C}_v^{e}(k;G)$. Then $F_2(W)\neq F_2(\widetilde{W})$,  since they do not have the same initial vertex or $W_1\neq h(\widetilde{W_1})$ or $W_3\neq h(\widetilde{W_3})$ by $e'\notin E(W_1) \cup E(W_3)$ and $e'\in E(h(\widetilde{W_1}))\cup E(h(\widetilde{W_3}))$.
\end{proof}
Now we are ready to prove Theorem~\ref{Maintheorem2}.
\begin{theorem}
Let $\pi=(d_0,d_1,...,d_{n-1})$ and $\pi'=(d_0',d_1'...,d_{n-1})$ be decreasing degree sequences of trees of the same order such
that $\pi\lhd \pi'$. Then for any integer $k>0$ we have
$$C(2k;T_{\pi}^*)\lhd_w C(2k;T_{\pi'}^*).$$
If $\pi\neq \pi'$ and $k>1$, then the majorization is strict.
\end{theorem}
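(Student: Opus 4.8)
The plan is to split the majorization $\pi\lhd\pi'$ into single elementary transfers and to treat each transfer by one branch-move, using Theorem~\ref{VertexM}/\ref{EdgeM} followed by Theorem~\ref{Maintheorem1}. Since $\pi$ and $\pi'$ are $n$-tuples of positive integers with the same sum $2(n-1)$, the hypothesis $\pi\lhd\pi'$ is the same as $\pi\lhd_w\pi'$, and by the basic theory of majorization (see \cite{MaO}) there is a chain $\pi=\pi^{(0)}\lhd\pi^{(1)}\lhd\cdots\lhd\pi^{(m)}=\pi'$ in which each $\pi^{(t+1)}$ arises from $\pi^{(t)}$ by subtracting $1$ from one entry and adding $1$ to an entry that is at least as large; this chain can moreover be chosen so that every $\pi^{(t)}$ is again an $n$-tuple of positive integers summing to $2(n-1)$, hence a tree degree sequence. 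As $\lhd_w$ is transitive, it suffices to prove the inequality when $\pi'$ is obtained from $\pi$ by replacing two entries $d_j\le d_i$ with $d_j-1$ and $d_i+1$; here $d_j\ge2$, since the reduced entry $d_j-1$ must still be $\ge1$.

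For one such transfer I would construct a tree $T\in\mathcal T_{\pi'}$ from $T_\pi^*$ by a single branch-move, and then close the gap with Theorem~\ref{Maintheorem1}. Pick in $T_\pi^*$ a vertex $v$ of degree $d_j$ and a (distinct) vertex $v'$ of degree $d_i$, and re-root $T_\pi^*$ at the midpoint of the $v$--$v'$ path, which is a vertex when that distance is even and an edge when it is odd; in this new rooting $v$ and $v'$ lie at the same level, and $T_\pi^*$ --- being a greedy tree, and hence level greedy with respect to every vertex or edge root (the ``semi-regular'' property invoked in the proof of Theorem~\ref{2keven}, cf.\ \cite{AnS,SzW}) --- still satisfies the hypotheses of Theorem~\ref{VertexM} (resp.\ Theorem~\ref{EdgeM}). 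As $d_j\ge2$, the vertex $v$ has a neighbour not on the path toward the new root, so there is a branch $B$ hanging from $v$ away from the root; let $T$ be obtained by detaching $B$ from $v$ and re-attaching it to $v'$. This lowers the degree of $v$ by $1$, raises that of $v'$ by $1$, and leaves all other degrees unchanged, so $T\in\mathcal T_{\pi'}$; Theorem~\ref{VertexM}/\ref{EdgeM} gives $C(2k;T_\pi^*)\lhd_w C(2k;T)$ and Theorem~\ref{Maintheorem1} gives $C(2k;T)\lhd_w C(2k;T_{\pi'}^*)$, so transitivity of $\lhd_w$ yields $C(2k;T_\pi^*)\lhd_w C(2k;T_{\pi'}^*)$. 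Iterating along the chain proves the first assertion.

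The step I expect to be the main obstacle is checking that Theorem~\ref{VertexM}/\ref{EdgeM} really does apply to the re-rooted $T_\pi^*$ with the chosen $v$, $v'$, $B$: one must verify that the configuration around the common ancestor of $v$ and $v'$ matches Figure~1 --- i.e.\ that the two sides of that ancestor carry subtrees joined by a level-preserving isomorphism --- and that the move goes in the expanding direction ($\deg v\le\deg v'$), so that one obtains $C(2k;G)\lhd_w C(2k;T)$ and not its reverse. The first point is exactly where greedy-ness is used. Should the clean re-rooting above fail to land $v,v'$ in a configuration covered by Figure~1, the fallback is to refine the transfer chain so that each step moves a unit between two degree values that genuinely co-occur on a level of the current greedy tree, proving the refined statement by induction on the number of transfers still to be performed.

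Finally, for the strict case assume $\pi\ne\pi'$ and $k>1$. Then at least one step of the chain performs a genuine branch-move, carrying $B$ to a position not isomorphic to its original one. Inspecting the explicit injections $F$, $F_1$, $F_2$ built in the proofs of Theorems~\ref{VertexM} and \ref{EdgeM}, one checks that for $k>1$ at least one of them fails to be surjective --- the target tree has a closed walk of length $2k$ that uses the relabelled edge $e'$ and has no preimage --- so the corresponding partial-sum inequality in $C(2k;T_\pi^*)\lhd_w C(2k;T)$ is strict; together with the weak inequalities supplied by Theorem~\ref{Maintheorem1} at the remaining steps of the chain, this gives strict weak majorization $C(2k;T_\pi^*)\lhd_w C(2k;T_{\pi'}^*)$.
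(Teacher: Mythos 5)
Your proposal is correct and follows essentially the same route as the paper: the paper also reduces $\pi\lhd\pi'$ to a chain of elementary unit transfers (its intermediate sequence $\pi_1$ adds $1$ to the first entry where the sequences differ and subtracts $1$ from the last), re-roots $T_\pi^*$ at the vertex or edge midpoint of the path between the two affected vertices according to the parity of their distance, applies Theorem~\ref{VertexM} or~\ref{EdgeM} for the branch move, and then invokes Theorem~\ref{Maintheorem1} (via Lemmas~\ref{lemma1.4} and~\ref{Edgeroot}) to pass from the modified tree to the greedy tree of the new degree sequence. The concern you flag about verifying the Figure~1 configuration after re-rooting, and the strictness claim, are treated no more explicitly in the paper than in your sketch.
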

\begin{proof}
If $\pi= \pi'$, the assertion holds. Next suppose $\pi\neq \pi'$, then there exists an integer $i$ such that $l_i\neq l_i'$. Set $\{i:l_i\neq l_i'\}$, by
$\sum\limits_{i=0}^{n-1}l_i=\sum\limits_{i=0}^{n-1}l_i'$, we will find that $\{i:l_i\neq l_i'\}$ has at least two elements. Let $l=\min\{i:l_i\neq l_i'\},~L=\max\{i:l_i\neq l_i'\}$. Then $d_l<d_l',~d_L>d_L'$. Let
$$\pi_1=(d_0,...,d_{l-1},d_{l}+1,...,d_{L}-1,...,d_{n-1}).$$
We will find that $\pi_1$ is decreasing and $\pi\lhd\pi_1\lhd\pi'$. Next consider the two vertices $u$ and $v$ in $T_{\pi}$ such that $d(u)=d_l,~d(v)=d_L$, then divide the following two cases:\\
{\bf Case 1}: If the distance between $u$ and $v$ is even. Let $w$ be the middle vertex in the path from $u$ to $v$, consider $T_{\pi}$ as a rooted tree with root $w$. Then $u,~v$ are in the same level, let $v'$ be a children of $v$, Consider the tree $T=T_{\pi}^*-vv'+uv'$. By Lemma~\ref{lemma1.4} and Lemma~\ref{VertexM}, we have
$$C(2k;T_{\pi}^*)\lhd_w C(2k;T)\lhd_w C(2k;T_{\pi_1}^*).$$
{\bf Case 2}: If the distance between $u$ and $v$ is odd. Let $yy'$ be the middle in the path from $u$ to $v$, consider $T_{\pi}$ as a edge-rooted tree with edge root $yy'$. Then $u,~v$ are in the same level, let $v'$ be a children of $v$, Consider the tree $T=T_{\pi}^*-vv'+uv'$. By Lemma~\ref{Edgeroot} and Lemma~\ref{EdgeM}, we have
$$C(2k;T_{\pi}^*)\lhd_w C(2k;T)\lhd_w C(2k;T_{\pi_1}^*).$$
By the two cases, we find that $C(2k;T_{\pi}^*)\lhd_w C(2k;T_{\pi_1}^*)$. By repeating the above process we can get $\pi=\pi_0,~\pi_1,~\pi_2,~...,~\pi_m=\pi'$ such that $\pi=\pi_0\lhd\pi_1\lhd\pi_2\lhd...\lhd\pi_m=\pi'$ and
$$C(2k;T_{\pi}^*)=C(2k;T_{\pi_0}^*)\lhd_w C(2k;T_{\pi_1}^*)\lhd_w\cdots\lhd_w C(2k;T_{\pi_m}^*)=C(2k;T_{\pi'}^*).$$
This completes the proof.
\end{proof}
\begin{corollary}
For any $n$ vertex tree. Then
$$C(2k;T)\lhd_w C(2k;S_n),$$
for any positive integer $k$, where $S_n$ is a star of order $n$.
\end{corollary}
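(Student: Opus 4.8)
The plan is to realize the star $S_n$ as the greedy tree of the degree sequence $\pi'=(n-1,1,\cdots,1)$, to check that $\pi'$ majorizes the degree sequence of every $n$-vertex tree, and then to chain Theorems~\ref{Maintheorem1} and~\ref{Maintheorem2}.

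First I would fix an arbitrary $n$-vertex tree $T$ and let $\pi=(d_0,\cdots,d_{n-1})$ be its nonincreasing degree sequence. Since $T$ has $n-1$ edges, $\sum_{i=0}^{n-1}d_i=2(n-1)$; since $T$ is connected, $d_i\ge 1$ for all $i$; and $d_0\le n-1$. Writing $\pi'=(n-1,1,\cdots,1)$ for the degree sequence of $S_n$, we have $d_0\le n-1=d_0'$ and, for each $t$ with $1\le t\le n-1$,
$$\sum_{i=0}^{t}d_i=2(n-1)-\sum_{i=t+1}^{n-1}d_i\le 2(n-1)-(n-1-t)=(n-1)+t=\sum_{i=0}^{t}d_i',$$
with equality at $t=n-1$. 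Hence $\pi\lhd\pi'$, so in particular $\pi\lhd_w\pi'$; note also that the greedy tree $T_{\pi'}^*$ is exactly $S_n$.

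Next I would apply Theorem~\ref{Maintheorem1} to the tree $T$ to obtain $C(2k;T)\lhd_w C(2k;T_\pi^*)$, and Theorem~\ref{Maintheorem2} to the pair $\pi\lhd_w\pi'$ to obtain $C(2k;T_\pi^*)\lhd_w C(2k;T_{\pi'}^*)=C(2k;S_n)$. Since weak majorization is transitive, concatenating these gives $C(2k;T)\lhd_w C(2k;S_n)$, which is the assertion. (Only even lengths require attention, since every closed walk in a tree has even length, so $C(m;T)$ is the zero vector for odd $m$ and the statement is trivial there.)

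I expect no real obstacle here: the only substantive content is the elementary partial-sum inequality giving $\pi\lhd\pi'$ together with the identification $T_{\pi'}^*=S_n$, after which the result is a direct invocation of the two main theorems and transitivity of $\lhd_w$. The single point to state carefully is that $\pi$ and $\pi'$ have the same order $n$, as required in the hypotheses of Theorem~\ref{Maintheorem2}.
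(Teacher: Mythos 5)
Your proposal is correct and is exactly the intended derivation: the paper states this corollary as an immediate consequence of Theorems~\ref{Maintheorem1} and~\ref{Maintheorem2} applied to $\pi'=(n-1,1,\cdots,1)$ with $T_{\pi'}^*=S_n$, which is precisely what you do. The partial-sum verification that every tree degree sequence is majorized by $(n-1,1,\cdots,1)$ is the only computation needed, and yours is right.
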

\begin{corollary}
For any $n$ vertex tree $T$ with maximal degree is $\Delta$. Then
$$C(2k;T)\lhd_w C(2k;T_{\pi}^*),$$
where $\pi=(\Delta,...,\Delta,r,1,...,1),~1\le r< \Delta$, the sum of the elements of $\pi$ is $2n-2$.
\end{corollary}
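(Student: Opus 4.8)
Write $\tau=(\tau_0\ge\tau_1\ge\cdots\ge\tau_{n-1})$ for the degree sequence of $T$ arranged nonincreasingly; it is a tree degree sequence on $n$ vertices with $\tau_0\le\Delta$. The plan I would follow is to interpose the greedy tree built on $\tau$: by Theorem~\ref{Maintheorem1}, $C(2k;T)\lhd_w C(2k;T_\tau^*)$, and --- once the majorization $\tau\lhd\pi$ is established --- Theorem~\ref{Maintheorem2} gives $C(2k;T_\tau^*)\lhd_w C(2k;T_\pi^*)$; since $\lhd_w$ is transitive these combine to the assertion. So everything reduces to the purely numerical claim that $\pi$ is the largest, in the majorization order, among all nonincreasing sequences of $n$ positive integers with sum $2n-2$ and every entry $\le\Delta$ --- a class that contains $\tau$.

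To prove that claim I would first pin down $\pi$ explicitly: with $a=\lfloor(n-2)/(\Delta-1)\rfloor$ and $r=n-1-a(\Delta-1)$ one checks $1\le r\le\Delta-1$, so that $\pi=(\Delta^{\,a},r,1^{\,n-1-a})$ is exactly the sequence named in the statement and is well defined; a short computation then gives $\sum_{i=0}^{t}\pi_i=\min\{(t+1)\Delta,\;n-1+t\}$ for every $t\in\{0,\dots,n-1\}$. Now let $\tau$ be any member of the class above. For each $t$ one has $\sum_{i=0}^{t}\tau_i\le(t+1)\Delta$ because every entry is at most $\Delta$, and $\sum_{i=0}^{t}\tau_i=(2n-2)-\sum_{i=t+1}^{n-1}\tau_i\le(2n-2)-(n-1-t)=n-1+t$ because the $n-1-t$ tail entries are each $\ge1$; hence $\sum_{i=0}^{t}\tau_i\le\sum_{i=0}^{t}\pi_i$ for all $t$. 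As $\tau$ and $\pi$ have the same sum $2n-2$, this is precisely $\tau\lhd\pi$.

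Putting the pieces together yields $C(2k;T)\lhd_w C(2k;T_\tau^*)\lhd_w C(2k;T_\pi^*)$, which is the claimed inequality. With Theorems~\ref{Maintheorem1} and \ref{Maintheorem2} in hand there is little real obstacle; the only thing to verify by hand is the water-filling step --- that $\pi$ genuinely maximizes every partial sum over the bounded-degree class, equivalently the identity $\sum_{i=0}^{t}\pi_i=\min\{(t+1)\Delta,\,n-1+t\}$ --- and the one subtlety there is that positivity of the degrees forces a reserve of $1$ on each tail coordinate, which is exactly what fixes the block size $a$ and the remainder $r$.
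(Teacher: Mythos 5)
Your proof is correct and is essentially the intended derivation: the paper leaves the corollary's proof implicit, but it follows exactly as you argue, by chaining Theorem~\ref{Maintheorem1} (to pass from $T$ to the greedy tree on its own degree sequence $\tau$) with Theorem~\ref{Maintheorem2} (once $\tau\lhd\pi$ is checked), and your water-filling verification that $\sum_{i=0}^{t}\pi_i=\min\{(t+1)\Delta,\,n-1+t\}$ dominates every partial sum of a degree sequence with entries in $[1,\Delta]$ and total $2n-2$ is the right and complete way to establish that majorization.
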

\begin{corollary}
For any tree $T$ of order $n$ with $s$ leaves. Then
$$C(2k;T)\lhd_w C(2k;T_{\pi}^*),$$
for any positive integer $k$, where $\pi=(s,2,2,...,2,1,1,...,1)$ (the number of $2$ is $n-s-1$, the number of $1$ is $s$).
\end{corollary}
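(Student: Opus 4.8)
The plan is to deduce the corollary directly from Theorems~\ref{Maintheorem1} and~\ref{Maintheorem2}, the only new ingredient being an elementary majorization fact about the degree sequences of trees with a prescribed number of leaves. Thus I would phrase the argument as: degree sequence of $T$ $\longrightarrow$ greedy tree by Theorem~\ref{Maintheorem1} $\longrightarrow$ spider $T_\pi^*$ by Theorem~\ref{Maintheorem2}, then invoke transitivity of $\lhd_w$.

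First I would let $\tau=(d_{0}\ge d_{1}\ge\cdots\ge d_{n-1})$ be the degree sequence of $T$. Since $T$ has exactly $s$ leaves, $\tau$ ends with $s$ entries equal to $1$, while its remaining $n-s$ entries $a_{1}\ge a_{2}\ge\cdots\ge a_{n-s}$ are all $\ge 2$ and satisfy $a_{1}+\cdots+a_{n-s}=(2n-2)-s$. By Theorem~\ref{Maintheorem1}, $C(2k;T)\lhd_{w}C(2k;T_{\tau}^{*})$ for every positive integer $k$. Both $\tau$ and $\pi=(s,2,\dots,2,1,\dots,1)$ are nonincreasing sequences of $n$ positive integers with sum $2n-2$, so $\pi$ is a tree degree sequence and $T_{\pi}^{*}$ is well defined; hence it suffices to prove $\tau\lhd\pi$, for then Theorem~\ref{Maintheorem2} gives $C(2k;T_{\tau}^{*})\lhd_{w}C(2k;T_{\pi}^{*})$, and transitivity of $\lhd_{w}$ finishes the proof.

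To see $\tau\lhd\pi$, note first that the $t$ largest entries of $\pi$ sum to $s+2(t-1)$ for $1\le t\le n-s$, and to $(2n-2-s)+(t-(n-s))$ for $n-s\le t\le n$. For $1\le t\le n-s$, bounding the omitted tail $a_{t+1},\dots,a_{n-s}$ from below by $2$ each gives
$$\sum_{i=1}^{t}a_{i}=(2n-2-s)-\sum_{i=t+1}^{n-s}a_{i}\le(2n-2-s)-2(n-s-t)=s-2+2t,$$
which is exactly the sum of the $t$ largest entries of $\pi$ (and the $a_i$ are indeed the $t$ largest entries of $\tau$, the leaf degrees $1$ being smallest). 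For $n-s\le t\le n$, the $t$ largest entries of $\tau$ comprise all $n-s$ non-leaf degrees, of total $2n-2-s$, together with $t-(n-s)$ ones, hence sum to $(2n-2-s)+(t-(n-s))$, the same as for $\pi$. So no partial sum of $\tau$ exceeds the corresponding one of $\pi$, i.e.\ $\tau\lhd\pi$.

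I do not expect a substantive obstacle: the crux is the one-line inequality above, which expresses that among all degree sequences of trees on $n$ vertices with $s$ leaves the majorization-largest is obtained by piling the whole ``excess'' $s-2$ onto a single non-leaf vertex; the extreme cases $s=2$ (where $\pi$ is the path $P_{n}$) and $s=n-1$ (where $n-s-1=0$ and $\pi=(n-1,1,\dots,1)$ is the star) are covered by the same computation, and everything else is just an appeal to Theorems~\ref{Maintheorem1} and~\ref{Maintheorem2}. The two preceding corollaries follow identically once one observes that $(\Delta,\dots,\Delta,r,1,\dots,1)$, respectively $(n-1,1,\dots,1)$, is the majorization-maximum in its family of tree degree sequences.
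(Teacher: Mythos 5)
Your proposal is correct and follows exactly the route the paper intends (the corollary is stated without proof as a consequence of Theorems~\ref{Maintheorem1} and~\ref{Maintheorem2}): apply Theorem~\ref{Maintheorem1} to pass from $T$ to $T_\tau^*$, verify $\tau\lhd\pi$, and apply Theorem~\ref{Maintheorem2}. Your majorization check is the right one — the bound $\sum_{i=1}^t a_i\le s-2+2t$ obtained by bounding the omitted non-leaf degrees below by $2$, together with equality of the total sums at $2n-2$, is exactly what is needed.
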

\begin{corollary}
For any tree T of order $n$ with independence number $\alpha\ge n/2$ or with
matching number $n-\alpha \le n/2$. Then
$$C(2k;T)\lhd_w C(2k;T_{\pi}^*),$$
for any positive integer $k$, where $\pi=(\alpha,2,2,...,2,1,1,...,1)$ (the number of $2$ is $n-\alpha-1$, the number of $1$ is $\alpha$).
\end{corollary}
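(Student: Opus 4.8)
The plan is to reduce the corollary to the two main theorems plus one purely combinatorial fact about degree sequences. Let $T$ be a tree of order $n$, write $\sigma=(d_{[0]}\ge d_{[1]}\ge\cdots\ge d_{[n-1]})$ for its nonincreasing degree sequence, and let $\alpha$ be its independence number. Applying Theorem~\ref{Maintheorem1} to $T\in\mathcal{T}_\sigma$ gives $C(2k;T)\lhd_w C(2k;T_\sigma^*)$. If I can show that $\sigma$ is majorized by $\pi=(\alpha,2,\dots,2,1,\dots,1)$, then Theorem~\ref{Maintheorem2} yields $C(2k;T_\sigma^*)\lhd_w C(2k;T_\pi^*)$, and since weak majorization is transitive (prefix sums chain through), composing the two relations gives $C(2k;T)\lhd_w C(2k;T_\pi^*)$, which is the assertion. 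Thus everything reduces to the claim $\sigma\lhd\pi$.

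The heart of the argument is therefore this degree-sequence majorization. Both $\sigma$ and $\pi$ consist of $n$ positive integers summing to $2(n-1)$, so full majorization $\sigma\lhd\pi$ is equivalent to the prefix-sum bounds $\sum_{i=0}^{t}d_{[i]}\le P(t)$ for $0\le t\le n-1$, where $P(t)$ is the sum of the $t+1$ largest entries of $\pi$; a direct computation gives $P(t)=\min\{\alpha+2t,\ n-1+t\}$, the first expression binding for $t\le n-\alpha-1$ and the second afterwards. I would prove the two candidate bounds separately, writing $\ell$ for the number of leaves of $T$. First, since $d_{[i]}-1\ge0$ and $\sum_{i}(d_{[i]}-1)=2(n-1)-n=n-2$, every prefix satisfies $\sum_{i=0}^{t}(d_{[i]}-1)\le n-2$, i.e.\ $\sum_{i=0}^{t}d_{[i]}\le n-1+t$. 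Second, the sequence $d_{[i]}-2$ is nonincreasing, so its maximal prefix sum equals the sum of its nonnegative terms, namely $\sum_{d_i\ge2}(d_i-2)=\ell-2$; hence $\sum_{i=0}^{t}(d_{[i]}-2)\le\ell-2$ for every $t$, which, once we know $\ell\le\alpha$, gives $\sum_{i=0}^{t}d_{[i]}\le 2t+\ell\le\alpha+2t$. Combining the two estimates yields $\sum_{i=0}^{t}d_{[i]}\le P(t)$ for all $t$, with equality at $t=n-1$, i.e.\ exactly $\sigma\lhd\pi$.

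The main obstacle—indeed the only nontrivial input—is the leaf bound $\ell\le\alpha$. For $n\ge3$ no two leaves of a tree are adjacent, so the leaf set is independent and $\ell\le\alpha$ follows at once; the case $n=2$ is immediate because then $\sigma=\pi=(1,1)$. I would also record that $\pi$ is a genuine tree degree sequence: its entries are positive and sum to $2(n-1)$, and $n-\alpha-1\ge0$ since $\alpha\le n-1$, while the hypothesis $\alpha\ge n/2$ (equivalently matching number $n-\alpha\le n/2$) holds automatically because $T$ is bipartite and merely fixes which $\pi$ is in play. Finally, restricting to even walk length $2k$ is harmless, since a tree has no closed walks of odd length and the odd-length vectors vanish identically; so the chain $C(2k;T)\lhd_w C(2k;T_\sigma^*)\lhd_w C(2k;T_\pi^*)$ completes the proof.
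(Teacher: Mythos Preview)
Your proposal is correct and follows exactly the route the paper intends: the corollary is stated without proof because it is meant to drop out of Theorems~\ref{Maintheorem1} and~\ref{Maintheorem2} once one checks the degree-sequence majorization $\sigma\lhd\pi$, and you supply that check cleanly (the key inputs $\sum_{i\le t}(d_{[i]}-1)\le n-2$, $\sum_{i\le t}(d_{[i]}-2)\le \ell-2$, and $\ell\le\alpha$ are all verified correctly).
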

 For a given tree degree sequence $\pi$, we determined the maximum value of  the number of the closed walks of length $k$ starting at any vertex $v\in U\subseteq V(T)$ in any tree $T=(V(T), E(T))$ with $|U|=r$.  It is interesting to determine the minimum value of them.

\end{document}